 \title[On the minimal model theory in the case $\nu=0$]{On the minimal model theory for dlt pairs of numerical log Kodaira dimension zero}
\author{Yoshinori  Gongyo}
\address{Graduate School of Mathematical Sciences, 
the University of Tokyo, 3-8-1 Komaba, Meguro-ku, Tokyo 153-8914, Japan.}
\email{gongyo@ms.u-tokyo.ac.jp}
\date{2011/5/24, version 6.04}
\newcommand{\Supp}[0]{{\operatorname{Supp}}}
\newcommand{\Bs}[0]{{\operatorname{Bs}}}
\newtheorem{thm}{Theorem}[section]
\newtheorem{prop}[thm]{Proposition}
\newtheorem{lem}[thm]{Lemma}
\newtheorem{cor}[thm]{Corollary}
\newtheorem{conj}[thm]{Conjecture}
\newtheorem{cl}[thm]{Claim}
\theoremstyle{definition}
\newtheorem{defi}[thm]{Definition}
\newtheorem{notanddef}[thm]{Notation and Definition}
\newtheorem{rem}[thm]{Remark}
\newtheorem*{ack}{Acknowledgments}
\subjclass[2010]{14E30}
\keywords{minimal model, the abundance conjecture, numerical Kodaira dimension}
\begin{document}
\bibliographystyle{amsalpha+}
 
 \maketitle

\begin{abstract}
We prove the existence of good log minimal models for dlt pairs of numerical log Kodaira dimension $0$. 
\end{abstract}

\tableofcontents

\section{Introduction}Throughout this article, we work over $\mathbb{C}$, the complex number field. We will make use of the standard notation and definitions as in \cite{km} and \cite{kmm}. The minimal model conjecture for smooth varieties is the following:

\begin{conj}[Minimal model conjecture]\label{mmconj}Let $X$ be a smooth projective variety. Then there exists a minimal model or a Mori fiber space of $X$.

\end{conj}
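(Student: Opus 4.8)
The plan is to run the Minimal Model Program (MMP) on $X$. First I would check whether $K_X$ is nef. If it is, then $X$ itself is already a minimal model and we are done. Otherwise, by the Cone and Contraction Theorem the closed cone of curves $\overline{NE}(X)$ contains a $K_X$-negative extremal ray $R$, and there is an associated extremal contraction $\varphi_R\colon X\to Y$ contracting exactly those curves whose numerical class lies in $R$. Everything afterward is organized around the geometry of this $\varphi_R$.

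Next I would split into cases according to the type of $\varphi_R$. If $\dim Y<\dim X$, then $\varphi_R$ exhibits $X$ as a Mori fiber space and we are done. If $\varphi_R$ is birational and contracts a divisor (a divisorial contraction), I would replace $X$ by $Y$; the target has strictly smaller Picard number, and although $Y$ may acquire singularities it stays within the $\mathbb{Q}$-factorial terminal category, so the program continues. The delicate case is when $\varphi_R$ is small, i.e.\ birational with exceptional locus of codimension at least two: here $K_Y$ fails to be $\mathbb{Q}$-Cartier and one cannot simply continue. Instead one must perform a flip, constructing a new small birational model $X^{+}\to Y$ on which the strict transform of $K_X$ becomes relatively ample, and then replace $X$ by $X^{+}$. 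Granting the existence of flips in the relevant generality, the program proceeds.

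Iterating produces a sequence of divisorial contractions and flips. Divisorial contractions strictly drop the Picard number and hence occur only finitely often, so the whole issue reduces to the behavior of the flips. The hard part will be the \emph{termination of flips}: a priori the procedure could generate an infinite chain of flips, and excluding this is precisely the termination conjecture, which is the principal obstacle and is known only in low dimensions or under additional hypotheses. Once termination is granted, the final output of the program is either a model with $K$ nef (a minimal model) or a Mori fiber space, which is exactly the asserted dichotomy.
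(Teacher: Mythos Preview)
The statement you are attempting to prove is labeled a \emph{Conjecture} in the paper and is not proved there in generality; the paper only establishes special cases (in particular Theorem~\ref{main1}, the existence of minimal models for $\mathbb{Q}$-factorial dlt pairs with $\nu(K_X+\Delta)=0$). So there is no ``paper's own proof'' to compare against.

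Your proposal is the standard MMP outline, and you yourself identify the genuine gap: you write ``once termination is granted'' and note that termination of flips ``is known only in low dimensions or under additional hypotheses.'' That is exactly why the statement remains a conjecture. What you have written is therefore a conditional argument, not a proof: it reduces Conjecture~\ref{mmconj} to the existence and termination of flips, but does not supply the missing termination step. (Existence of flips is known by \cite{BCHM}, but unconditional termination is not.) Without an argument that the sequence of flips terminates in arbitrary dimension, the dichotomy at the end is not established. The paper's contribution is precisely to prove termination of a specific MMP with scaling under the extra hypothesis $\nu=0$ (Theorem~\ref{termination}), using the divisorial Zariski decomposition to force $K_X+\Delta\equiv 0$ once the scaling parameters tend to zero; nothing of that sort appears in your outline, and nothing in your outline closes the gap in general.
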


This conjecture is true in dimension $3$ and $4$ by Kawamata, Koll\'ar, Mori, Shokurov and Reid (cf. \cite{kmm}, \cite{km} and \cite{sho2}). In the case where $K_X$ is numerically equivalent to some effective divisor in dimension $5$, this conjecture is proved by Birkar (cf. \cite{b1}). When $X$ is of general type or $K_X$ is not pseudo-effective, Birkar, Cascini, Hacon and $\mathrm{M^{c}}$Kernan prove Conjecture \ref{mmconj} in arbitrary dimension (\cite{BCHM}). Moreover if $X$ has maximal Albanese dimension, Conjecture \ref{mmconj} is true by \cite{F-alb}. In this article, among other things, we consider Conjecture \ref{mmconj} in the case $\nu(K_X)=0$ (for the definition of $\nu$, see Definition \ref{numerical Kodaira dimension}):

\begin{thm}\label{main1}Let $(X,\Delta)$ be a projective $\mathbb{Q}$-factorial dlt pair such that $\nu(K_X+\Delta)=0$. Then there exists a minimal model $(X_m,\Delta_m)$ of $(X,\Delta)$. 

\end{thm}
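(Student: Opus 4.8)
The plan is to obtain the minimal model by running a single $(K_X+\Delta)$-MMP with scaling of an ample divisor and proving that it terminates, with the whole difficulty concentrated in the case where the scaling numbers decay to $0$; this is exactly where I expect the hypothesis $\nu(K_X+\Delta)=0$ to be indispensable. First I would unwind the hypothesis through Nakayama's divisorial Zariski decomposition: since $\nu(K_X+\Delta)=0$, the class $K_X+\Delta$ is pseudoeffective and its positive part is numerically trivial, so $K_X+\Delta\equiv N_\sigma(K_X+\Delta)=:N\ge 0$. Because the numerical dimension is preserved by a $(K_X+\Delta)$-negative birational contraction, the same will hold on every model produced below, and on any eventual minimal model $K_{X_m}+\Delta_m$ will be nef with $\nu=0$, hence numerically trivial; thus the target is precisely a $(K_X+\Delta)$-negative contraction that removes $N$ altogether. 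If convenient I would first pass to the $\mathbb{Q}$-factorial klt case by the standard dlt-to-klt reduction via special termination, so that \cite{BCHM} applies in the form I need.

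Now fix an ample $\mathbb{Q}$-divisor $A$ with $(X,\Delta+A)$ klt and run the $(K_X+\Delta)$-MMP with scaling of $A$, producing $X=X_0\dashrightarrow X_1\dashrightarrow\cdots$ and scaling numbers $\lambda_0\ge\lambda_1\ge\cdots$ with $\lambda_i=\inf\{t\ge 0: K_{X_i}+\Delta_i+tA_i \text{ is nef}\}$. Each step contracts a ray $R_i$ with $(K_{X_i}+\Delta_i)\cdot R_i<0$, so every step is genuinely $(K_X+\Delta)$-negative; hence if the program is finite it ends on a minimal model of $(X,\Delta)$, and it suffices to prove termination. Let $\lambda_\infty=\lim_i\lambda_i$. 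If $\lambda_\infty>0$, choose rational $\mu$ with $0<\mu<\lambda_\infty$; the identity $(K_{X_i}+\Delta_i+\mu A_i)\cdot R_i=(\mu-\lambda_i)(A_i\cdot R_i)<0$ shows that our program is simultaneously an MMP with scaling for the klt pair $(X,\Delta+\mu A)$, whose boundary is big, and this terminates by \cite{BCHM}, contradicting infinitude. So termination is automatic unless $\lambda_\infty=0$.

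The main obstacle is therefore the case $\lambda_\infty=0$, i.e.\ an infinite program whose scaling numbers decay to $0$, where the uniform ample part is lost and \cite{BCHM} no longer applies directly. This is exactly where I would exploit $\nu(K_X+\Delta)=0$: on each $X_i$ one has $K_{X_i}+\Delta_i\equiv N^{(i)}:=N_\sigma(K_{X_i}+\Delta_i)\ge 0$, and from $N^{(i)}\cdot R_i=(K_{X_i}+\Delta_i)\cdot R_i<0$ the contracted curves must lie in $\Supp N^{(i)}$, which by the rigidity of the negative part under the MMP is carried by the finitely many prime components of $N$. I would combine this rigidity with finiteness of models for the big pairs $(X,\Delta+\mu A)$ as $\mu\to 0^{+}$ (from \cite{BCHM}) to show that only finitely many models $X_i$ can occur — equivalently, that the numerical triviality of the positive part forbids an infinite cascade of flips. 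Once the program stops, $K_{X_m}+\Delta_m$ is nef, and by the second paragraph $(X_m,\Delta_m)$ is the desired minimal model.
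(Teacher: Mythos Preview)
Your setup and the case $\lambda_\infty>0$ are correct and match the paper. The gap is entirely in the case $\lambda_\infty=0$, where your argument is not a proof.

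You propose to ``combine this rigidity with finiteness of models for the big pairs $(X,\Delta+\mu A)$ as $\mu\to 0^+$'' to conclude only finitely many $X_i$ occur. This does not work. Finiteness of models from \cite{BCHM} applies to boundaries ranging over a compact rational polytope on which $K_X+\Delta'$ is big (or contains a fixed ample part); letting $\mu\to 0^+$ is exactly the degeneration that kills the hypothesis, so no uniform finiteness is available. Likewise, the observation that every contracted curve meets $\Supp N^{(i)}$ is true but does not bound the number of flips. The sentence ``the numerical triviality of the positive part forbids an infinite cascade of flips'' is the statement to be proved, not an argument for it. (A side remark: there is no ``standard dlt-to-klt reduction via special termination'' available here, and if $\llcorner\Delta\lrcorner\neq 0$ then $(X,\Delta+A)$ is not klt; the paper simply works with dlt pairs throughout.)

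The idea you are missing is the following, and it replaces your finiteness-of-models appeal entirely. Once $\lambda_i\to 0$, after some index $j$ every step is a flip, hence an isomorphism in codimension one. For each $i\ge j$ the divisor $K_{X_i}+\Delta_i+\lambda_i H_i$ is nef, and its strict transform on $X_j$ is therefore movable; letting $i\to\infty$ shows $K_{X_j}+\Delta_j\in\overline{\mathrm{Mov}}(X_j)$. Now pass to a log resolution $\varphi:Y\to X_j$. Since $\nu(\varphi^*(K_{X_j}+\Delta_j))=0$, Proposition~\ref{nu=0} gives $\varphi^*(K_{X_j}+\Delta_j)\equiv N(\varphi^*(K_{X_j}+\Delta_j))$. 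The movability of $K_{X_j}+\Delta_j$ forces $N(\varphi^*(K_{X_j}+\Delta_j))$ to be $\varphi$-exceptional (otherwise $\mathbf{B}_\equiv(K_{X_j}+\Delta_j+\epsilon G)$ would have a divisorial component for small $\epsilon$ and ample $G$). Hence $K_{X_j}+\Delta_j\equiv 0$ is already nef, contradicting the existence of a further step of the MMP. This is the contradiction the paper obtains; your sketch does not reach it.
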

Actually the author heared from Vladimir Lazi\'c that Theorem \ref{main1} for klt pairs has been proved by Druel (cf. \cite{d} ) after he finished  this work. In this article, moreover, we give the generalization of his result for dlt pairs by using the sophisticated Birkar--Cascini--Hacon--$\mathrm{M^{c}}$Kernan's results and Druel's method. Essentially our method seems to be same as  Druel's. However, by expanding this result to dlt pairs we give the different proof of the abundance theorem for log canonical pairs in the case $\nu=0$ as shown by  \cite{ckp} and \cite{k-ab}:

\begin{thm}[=Theorem \ref{nonvani_lc}]\label{main2} Let $X$ be a normal projective variety  and $\Delta$ an effective $\mathbb{Q}$-divisor.
Suppose that $(X, \Delta)$ is a log canonial pair such that $\nu(K_X+\Delta)=0$. Then $K_X+\Delta$ is abundant, i.e. $\nu(K_X+\Delta)=\kappa(K_X+\Delta)$.
\end{thm}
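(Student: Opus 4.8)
The plan is to derive Theorem \ref{main2} from Theorem \ref{main1} by reducing the log canonical case to the $\mathbb{Q}$-factorial dlt case, producing a nef minimal model via \ref{main1}, and then upgrading numerical triviality to $\mathbb{Q}$-linear triviality. Since $\kappa(K_X+\Delta)\le\nu(K_X+\Delta)=0$ always holds, the entire content is the nonvanishing $\kappa(K_X+\Delta)\ge 0$; and because an effective $\mathbb{Q}$-divisor that is numerically trivial must vanish, this is equivalent to proving $K_X+\Delta\sim_{\mathbb{Q}}0$. So the target of the whole argument is to show $K_X+\Delta\sim_{\mathbb{Q}}0$.

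First I would take a $\mathbb{Q}$-factorial dlt modification $f\colon (Y,\Delta_Y)\to (X,\Delta)$, i.e.\ a projective birational morphism with $(Y,\Delta_Y)$ $\mathbb{Q}$-factorial dlt and $K_Y+\Delta_Y=f^*(K_X+\Delta)$, which exists by the standard dlt blow-up. As $f$ is crepant, both the numerical and the Iitaka dimensions are preserved, so $\nu(K_Y+\Delta_Y)=0$ and $\kappa(K_Y+\Delta_Y)=\kappa(K_X+\Delta)$, and it suffices to treat $(Y,\Delta_Y)$. Next I would apply Theorem \ref{main1} to get a minimal model $(Y_m,\Delta_m)$, so that $K_{Y_m}+\Delta_m$ is nef; comparing on a common resolution shows $\nu$ and $\kappa$ are unchanged by the minimal model program, whence $\nu(K_{Y_m}+\Delta_m)=0$ and $\kappa(K_{Y_m}+\Delta_m)=\kappa(K_Y+\Delta_Y)$. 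For a nef divisor the condition $\nu=0$ forces numerical triviality, so $K_{Y_m}+\Delta_m\equiv 0$.

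It then remains to upgrade $K_{Y_m}+\Delta_m\equiv 0$ to $K_{Y_m}+\Delta_m\sim_{\mathbb{Q}}0$. When $(Y_m,\Delta_m)$ is klt this is exactly the abundance statement for numerically trivial klt pairs, which I would cite as the base case. In the genuinely dlt case I would induct on dimension: restricting to a component $S$ of $\lfloor\Delta_m\rfloor$ and applying adjunction gives a dlt pair $(S,\Delta_S)$ of smaller dimension with $K_S+\Delta_S=(K_{Y_m}+\Delta_m)|_S\equiv 0$, hence $K_S+\Delta_S\sim_{\mathbb{Q}}0$ by induction; I would then lift a section from $S$ to $Y_m$ through the ideal-sheaf sequence, the required $H^1$-vanishing coming from a Kawamata--Viehweg--type statement for the nef class $K_{Y_m}+\Delta_m$. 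Pulling back by $f$ and pushing through the minimal model map finally yields $K_X+\Delta\sim_{\mathbb{Q}}0$, hence $\kappa(K_X+\Delta)=0=\nu(K_X+\Delta)$.

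I expect this last step to be the main obstacle. The reduction to dlt and the construction of the nef minimal model are formal once Theorem \ref{main1} is in hand, but the passage from numerical to $\mathbb{Q}$-linear triviality requires genuine input: the numerically trivial abundance result in the klt case together with the inductive extension of sections across the reduced boundary $\lfloor\Delta_m\rfloor$. This is precisely the point at which the present minimal-model route recovers the conclusions obtained by other methods in \cite{ckp} and \cite{k-ab}.
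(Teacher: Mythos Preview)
Your overall strategy---dlt blow-up, then Theorem \ref{main1} to reach a minimal model, then nef with $\nu=0$ forces $\equiv 0$, then upgrade to $\sim_{\mathbb{Q}}0$---is exactly the paper's route. The paper packages the final upgrade as a black-box citation (Lemma \ref{g_K}, i.e.\ \cite[Theorem 1.2]{g}), whereas you attempt to sketch it.

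Two points need correction. First, your target statement $K_X+\Delta\sim_{\mathbb{Q}}0$ is too strong and is false in general (e.g.\ the blow-up of an abelian variety at a point has $\nu(K)=0$ but $K\not\sim_{\mathbb{Q}}0$). The MMP from $(Y,\Delta_Y)$ to $(Y_m,\Delta_m)$ may contract divisors; pulling $K_{Y_m}+\Delta_m\sim_{\mathbb{Q}}0$ back to a common resolution only yields $K_Y+\Delta_Y\sim_{\mathbb{Q}}E$ with $E\ge 0$ effective and exceptional over $Y_m$, hence $\kappa(K_X+\Delta)=0$ via Lemma \ref{pro-kappa-dimension}, which is all that is asserted. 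Second, your sketch of the upgrade $\equiv 0\Rightarrow\sim_{\mathbb{Q}}0$ has a real gap: the adjunction-and-lift shape is indeed how \cite{g} proceeds, but the surjectivity you need does not follow from Kawamata--Viehweg, since the relevant class is only nef (in fact $\equiv 0$), not big and nef. The argument in \cite{g} instead relies on Koll\'ar--Fujino type injectivity and torsion-freeness for lc pairs, together with Nakayama's klt base case; this is substantial, which is exactly why the paper isolates it as the external input Lemma \ref{g_K} rather than reproving it.
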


For the first time, Nakayama proved Theorem \ref{main2} when $(X,\Delta)$ is klt. Nakayama's proof is independent of Simpson's results \cite{simp}. Simpson's results are used to approach the abundance conjecture in \cite{cpt}.   Campana--Perternell--Toma prove Theorem \ref{main2} when $X$ is smooth and $\Delta=0$. Siu also gave an analytic proof of it (cf. \cite{siu}). The results of \cite{ckp}, \cite{k-ab} and \cite{siu} depend on \cite{simp} and \cite{bu}. In this article, we show Theorem \ref{main2} by using a method  different from \cite{cpt}, \cite{ckp}, \cite{k-ab} and \cite{siu}. In our proof of Theorem \ref{main2}, we do not need results of \cite{simp} and \cite{bu}. Our proof depends on \cite{BCHM} and \cite{g}.

We summarize the contents of this article. 
In Section \ref{section2}, we define the {\em Kodaira dimension} and the {\em numerical Kodaira dimension}, and collect some properties of these. In Section \ref{section3}, we review a {\em log minimal model program with scaling} and works of Birkar--Cascini--Hacon--$\mathrm{M^{c}}$Kernan. In Section \ref{section4}, we introduce the {\em divisorial Zariski decomposition} and collect some properties of it. Section \ref{section5} is devoted to
the proof of the existence of minimal models in the case $\nu=0$. In Section \ref{section6}, we prove Theorem \ref{main2}.

\begin{notanddef} 
Let $\mathbb K$ be the real number field $\mathbb R$ or the rational 
number field $\mathbb Q$. We set $\mathbb{K}_{>0}=\{x \in \mathbb{K}| x>0\}$.

Let $X$ be a normal variety and let $\Delta$ be an effective $\mathbb K$-divisor 
such that $K_X+\Delta$ is $\mathbb K$-Cartier. 
Then we can define the {\em{discrepancy}} $a(E, X, \Delta)\in \mathbb K$ for every prime 
divisor $E$ {\em{over}} $X$. 
If $a(E, X, \Delta) \geq -1$ (resp.~$>-1$) 
for every $E$, then $(X, \Delta)$ is called {\em{log canonical}} 
(resp.~{\em{kawamata log terminal}}). 
We sometimes abbreviate log 
canonical (resp.~kawamata log terminal) to {\em{lc}} (resp.~{\em{klt}}). 

Assume that $(X, \Delta)$ is log canonical. 
If $E$ is a prime divisor over $X$ 
such that $a(E, X, \Delta)=-1$, then $c_X (E)$ is called a 
{\em{log canonical center}} ({\em{lc center}}, for short) of $(X, \Delta)$, where $c_X(E)$ 
is the closure of the image of $E$ on $X$. 
For the basic properties of log canonical centers, 
see \cite[Section 9]{F-fund}. 

Let $\pi:X \to S$ be a projective morphism of normal quasi-projective varieties and $D$ a $\mathbb{Z}$-Cartier divisor on $X$.  We set the complete linear system $|D/S| = \{E|D \sim_{\mathbb{Z},S} E \geq 0 \}$ of $D$ over $S$. The base locus of the linear system $|D/S|$ is denoted by 
$\Bs |D/S|$. When $S=\mathrm{Spec}\,\mathbb{C}$, we denote simply $|D|$ and $\Bs|D|$.

\end{notanddef}

\begin{ack}The author wishes to express his deep gratitude to Professor Osamu Fujino for various comments and discussions. He would like to thank his supervisor Professor Hiromichi Takagi for carefully reading 
a preliminary version of this article and valuable comments. He also wishes to thank Professor Caucher Birkar, Professor Noboru Nakayama and Professor F\'ed\'eric Campana for valuable comments. Moreover he would like to thank Professor Vyacheslav V. Shokurov for serious remarks about the Kodaira dimensions of $\mathbb{R}$-divisors and Doctor Vladimir Lazi\'c for teaching him \cite{d}. He is partially supported by the Research Fellowships of the Japan Society for the Promotion of Science for Young Scientists (22$\cdot$7399). Lastly, he would like to thank Professor St\'ephane Druel, Professor Nobuo Hara, Professor Yujiro Kawamata, Professor Yoichi Miyaoka, Professor Keiji Oguiso and Professor Mihai P{\u{a}}un for the encouragement and comments. He also thanks the referee for pointing out errors, useful comments, and suggestions.
\end{ack}

\section {Preliminaries}\label{section2}

\begin{defi}[Classical Iitaka dimension, cf. {\cite[II, 3.2, Definition]{N}}]\label{Kodaira dimension}
Let $X$ be a normal projective variety and $D$ an $\mathbb{R}$-Cartier divisor on $X$. If $| \llcorner mD \lrcorner| \not= \emptyset$, we put a dominant rational map
 $$\phi_{|\llcorner mD \lrcorner |} :X \dashrightarrow W_m
,$$
with respect to the complete linear system of $\llcorner mD \lrcorner$. We define the {\em Classical Iitaka dimension} $\kappa(D)$ of $D$ as the following:

$$ \kappa(D)=\mathrm{max} \{ \mathrm{dim} W_m \}
$$ 
 if  $H^0(X, \llcorner mD \lrcorner ) \not= 0$ for some positive integer $m$ and $\kappa(D)=- \infty$ otherwise.
\end{defi}

\begin{lem}\label{pro-kappa-dimension}Let $Y$ be a normal projective variety, $\varphi: Y \to X$ a projective birational morphism onto a normal projective variety, and let $D$ be an $\mathbb{R}$-Cartier divisor on $X$.  Then it holds the following:
\begin{itemize}
\item[(1)]  $\kappa(\varphi^*D)= \kappa(\varphi^*D +E)$ for any $\varphi$-exceptional effective $\mathbb{R}$-divisor $E$, and
\item[(2)]  $\kappa(\varphi^*D)= \kappa(D)$.
\end{itemize}
\end{lem}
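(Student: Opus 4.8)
The plan is to reduce both statements to a single computation of spaces of global sections, exploiting that a birational morphism identifies the function fields $\mathbb{C}(Y)=\mathbb{C}(X)$. Throughout I identify $H^0(X,\mathcal{O}_X(L))$, for an integral divisor $L$ on the normal variety $X$, with the subspace $\{f\in \mathbb{C}(X)^{\times} : \mathrm{div}_X(f)+L\ge 0\}\cup\{0\}$ of $\mathbb{C}(X)$, and similarly on $Y$. The central claim is: \emph{for every effective $\varphi$-exceptional $\mathbb{R}$-divisor $E\ge 0$ on $Y$ (allowing $E=0$) and every positive integer $m$,}
\[
H^0\bigl(Y,\mathcal{O}_Y(\llcorner m\varphi^*D+mE\lrcorner)\bigr)=H^0\bigl(X,\mathcal{O}_X(\llcorner mD\lrcorner)\bigr)
\]
\emph{as subspaces of $\mathbb{C}(X)=\mathbb{C}(Y)$.} Granting this, both parts follow at once: the rational map $\phi_{|\llcorner mD\lrcorner|}$ attached to a linear system depends only on the underlying subspace of rational functions, so for each $m$ the maps $\phi_{|\llcorner m\varphi^*D\lrcorner|}$, $\phi_{|\llcorner m(\varphi^*D+E)\lrcorner|}$ and $\phi_{|\llcorner mD\lrcorner|}\circ\varphi$ have the same dense image $W_m$, hence the same $\dim W_m$. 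Taking the maximum over $m$ gives $\kappa(\varphi^*D)=\kappa(\varphi^*D+E)=\kappa(D)$, and the vanishing case $\kappa=-\infty$ is subsumed simultaneously.

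To prove the claim I would argue by two inclusions. For $\supseteq$, let $g\in H^0(X,\mathcal{O}_X(\llcorner mD\lrcorner))$, so $\mathrm{div}_X(g)+\llcorner mD\lrcorner\ge 0$. Since $\varphi^*$ carries effective divisors to effective divisors and $\varphi^*\llcorner mD\lrcorner$ is integral, I have $\llcorner m\varphi^*D+mE\lrcorner\ge \varphi^*\llcorner mD\lrcorner$, because $m\varphi^*D+mE=\varphi^*\llcorner mD\lrcorner+\varphi^*\{mD\}+mE$ with $\varphi^*\{mD\}\ge 0$ and $mE\ge 0$. Using $\mathrm{div}_Y(g)=\varphi^*\mathrm{div}_X(g)$ I then obtain $\mathrm{div}_Y(g)+\llcorner m\varphi^*D+mE\lrcorner\ge \varphi^*\bigl(\mathrm{div}_X(g)+\llcorner mD\lrcorner\bigr)\ge 0$, so $g$ lies in the left-hand side.

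For $\subseteq$, take $f$ with $\mathrm{div}_Y(f)+\llcorner m\varphi^*D+mE\lrcorner\ge 0$ and check the coefficient of $\mathrm{div}_X(f)+\llcorner mD\lrcorner$ along an arbitrary prime divisor $\bar P$ on $X$. Let $P$ denote the strict transform of $\bar P$ on $Y$, a non-$\varphi$-exceptional prime divisor. I use $\mult_P\varphi^*D=\mult_{\bar P}D$ and $\mult_P\mathrm{div}_Y(f)=\mult_{\bar P}\mathrm{div}_X(f)$, together with the decisive fact that $\mult_P E=0$ since $E$ is $\varphi$-exceptional; hence $\mult_P\llcorner m\varphi^*D+mE\lrcorner=\llcorner m\,\mult_{\bar P}D\lrcorner=\mult_{\bar P}\llcorner mD\lrcorner$, and the coefficient in question equals the nonnegative coefficient of $\mathrm{div}_Y(f)+\llcorner m\varphi^*D+mE\lrcorner$ along $P$. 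As $\bar P$ was arbitrary this yields $\mathrm{div}_X(f)+\llcorner mD\lrcorner\ge 0$, so $f$ lies in the right-hand side.

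The one genuinely delicate point is the $\subseteq$ inclusion: one must ensure that neither the exceptional divisor $E$ nor the exceptional discrepancy between $\llcorner m\varphi^*D\lrcorner$ and $\varphi^*\llcorner mD\lrcorner$ contributes along any non-exceptional prime, so that effectivity upstairs forces effectivity downstairs. This is precisely where $\varphi$-exceptionality of $E$ enters, and the bookkeeping of round-downs against the total transform $\varphi^*$ is the only step requiring real care.
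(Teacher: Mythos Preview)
Your argument is correct. The paper does not actually prove this lemma: it simply cites \cite[II, 3.11, Lemma]{N}, so there is no ``approach'' to compare against beyond the fact that your direct computation of global sections is the standard way to establish such a result and is essentially what lies behind the cited reference. Your handling of the only delicate point---tracking the round-down along non-exceptional primes via the strict transform, and using that $E$ and the discrepancy $\lfloor m\varphi^*D+mE\rfloor-\varphi^*\lfloor mD\rfloor$ are supported on exceptional divisors---is clean and complete. The identification of the rational maps via the common subspace of $\mathbb{C}(X)=\mathbb{C}(Y)$, with $\phi_{|\lfloor m(\varphi^*D+E)\rfloor|}=\phi_{|\lfloor mD\rfloor|}\circ\varphi$, is the right way to conclude.
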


\begin{proof} (1) and (2) follows from \cite[II, 3.11, Lemma]{N}.

\end{proof}

The following is remarked by Shokurov:

\begin{rem}\label{shokurov}In general, $\kappa(D)$ may not coincide with $\kappa(D')$ if $D \sim_{\mathbb{R}} D'$. For example, let $X$ be the $\mathbb{P}^1$, $P$ and $Q$ closed points in $X$ such that $P\not=Q$ and $a$ irrational number. Set $D=a(P-Q)$. Then $\kappa(D)=-\infty$ in spite of the fact that $D\sim_{\mathbb{R}}0$. 
\end{rem}

However, fortunately, $\kappa(D)$ coincides with $\kappa(D')$ if $D$ and $D'$ are effective divisors such that $D \sim_{\mathbb{R}} D'$ (\cite[Corollary 2.1.4]{choi}). Hence it seems reasonable that we define the following as the {\em Iitaka} ({\em Kodaira}) dimension for $\mathbb{R}$-divisors.

\begin{defi}[Invariant Iitaka dimension, {\cite[Definition 2.2.1]{choi}}, cf. {\cite[Section 7]{choisho}}]\label{new_Kodaira_dimension}Let $X$ be a normal projective variety and $D$ an $\mathbb{R}$-Cartier divisor on $X$.  We define the {\em invariant Iitaka dimension} $\kappa(D)$ of $D$ as the following:

$$ K(D)=\kappa(D')
$$ 
 if  there exists an effective divisor $D'$ such that $D \sim_{\mathbb{R}} D'$ and $K(D)=- \infty$ otherwise. Let $(X,\Delta)$ be a log canonical. Then we call $K(K_X+\Delta)$ the {\em log Kodaira dimension} of $(X,\Delta)$. 

\end{defi}

\begin{defi}[Numerical Iitaka dimension]\label{numerical Kodaira dimension} Let $X$ be a normal projective variety, $D$ an $\mathbb{R}$-Cartier divisor and $A$ an ample Cartier divisor on $X$. We set
$$\sigma(D,A)=\mathrm{max}\{k \in \mathbb{Z}_{\geq 0}| \limsup_{m \to \infty}m^{-k} \mathrm{dim} H^0(X, \llcorner mD \lrcorner+A) >0  \}
$$ 
 if  $H^0(X, \llcorner mD \lrcorner+A) \not= 0$ for infinitely many $m \in \mathbb{N}$ and $\sigma(D,A)=- \infty$ otherwise. We define 
 $$\nu(D)= \max \{ \sigma(D,A) | \text{$A$ is\ a\ ample\ divisor\ on\ $Y$} \}.
 $$
 Let $(X,\Delta)$ be a log pair. Then we call $\nu(K_X+\Delta)$ the {\em numerical log Kodaira dimension} of $(X,\Delta)$. If $\Delta=0$, we simply say $\nu(K_X)$ is the {\em numerical Kodaira dimension} of $X$.
 
\end{defi}

\begin{lem}\label{pro-nu-dimension}Let $Y$ be a normal projective variety, $\varphi: Y \to X$ a projective birational morphism onto a normal projective variety, and let $D$ be an $\mathbb{R}$-Cartier divisor on $X$.  Then it holds the following:
\begin{itemize}
\item[(1)]  $\nu(\varphi^*D)= \nu(\varphi^*D +E)$ for any $\varphi$-exceptional effective $\mathbb{R}$-divisor $E$, 
\item[(2)]  $\nu(\varphi^*D)= \nu(D)$, and
\item[(3)] $\nu(D)=\max\{k \in \mathbb{Z}_{\geq0}|D^{k} \not\equiv 0  \}$ when $D$ is nef.
\end{itemize}

\end{lem}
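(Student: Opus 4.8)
The plan is to reduce all three parts to a single comparison of spaces of sections under $\varphi$, supplemented by asymptotic Riemann--Roch in the nef case. Throughout set $n=\dim X$ and recall $\varphi_*\mathcal{O}_Y=\mathcal{O}_X$, since $\varphi$ is birational and $X,Y$ are normal. The easy inequalities come from monotonicity: because the round-down is superadditive and an effective $G$ supplies a section of $\llcorner mG\lrcorner$, twisting by $G$ only enlarges $H^0$, which gives $\nu(\varphi^*D)\le \nu(\varphi^*D+E)$ in (1) at once. For the inequality $\nu(D)\le\nu(\varphi^*D)$ in (2) I would fix an ample $A$ on $X$ computing $\nu(D)$ and produce an injection $H^0(X,\llcorner mD\lrcorner+A)\hookrightarrow H^0(Y,\llcorner m\varphi^*D\lrcorner+A_Y)$ with a single ample $A_Y$ independent of $m$. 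The trick avoiding pullback of the non-Cartier divisor $\llcorner mD\lrcorner$ is to use $\llcorner mD\lrcorner\le mD$: if $f$ is a section then $(f)_X+mD+A\ge0$, so pulling back the $\mathbb R$-Cartier divisor gives $(f)_Y+m\varphi^*D+\varphi^*A\ge0$, and since $\llcorner m\varphi^*D\lrcorner=m\varphi^*D-\{m\varphi^*D\}$ with fractional part bounded uniformly by $1$, the choice $A_Y=\varphi^*A+H$ with $H$ ample large enough to dominate $\varphi^*A$ together with a fixed reduced divisor on $\Supp\varphi^*D$ makes $(f)_Y+\llcorner m\varphi^*D\lrcorner+A_Y\ge0$. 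Taking the maximum over $A$ yields $\nu(D)\le\nu(\varphi^*D)$.

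The substantive direction, which I expect to be the main obstacle, is bounding sections upstairs by sections downstairs; this is where $\varphi$-exceptionality enters. The key computation is that both $\llcorner m\varphi^*D\lrcorner$ and $\llcorner m(\varphi^*D+E)\lrcorner$ push forward to $\llcorner mD\lrcorner$: along the strict transform of a prime $P\subset\Supp D$ the coefficient is $\llcorner m\,\mathrm{coeff}_P(D)\lrcorner$, while the $\varphi$-exceptional components (including all of $mE$, precisely because $E$ is exceptional) are killed by $\varphi_*$. Hence pushing forward a section gives $H^0(Y,\llcorner m(\varphi^*D+E)\lrcorner+A_Y)\hookrightarrow H^0(X,\llcorner mD\lrcorner+\varphi_*A_Y)$ for every very ample (hence effective) $A_Y$ on $Y$. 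Since $\varphi_*A_Y$ is an effective divisor on $X$, it is dominated by some ample $A''$, so $\sigma(\varphi^*D+E,A_Y)\le\sigma(D,A'')\le\nu(D)$; the maximum over $A_Y$ gives $\nu(\varphi^*D+E)\le\nu(D)$. Taking $E=0$ finishes (2), and feeding (2) back in finishes (1). I would therefore establish (2) first and deduce (1).

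For (3) I would first invoke (2) to replace $X$ by a resolution, so $X$ is smooth and $D$ stays nef with the numbers $(\varphi^*D)^k\cdot\varphi^*A^{\,n-k}=D^k\cdot A^{n-k}$ unchanged; set $\nu_0=\max\{k:D^k\not\equiv0\}$. Sandwiching any two ample divisors between effective multiples of one another shows the growth order $\sigma(D,A)$ is independent of $A$, so it suffices to compute it for one convenient choice. Taking $A$ sufficiently ample, Fujita's vanishing theorem gives $H^i(X,\llcorner mD\lrcorner+A)=0$ for all $i>0$ and all $m\ge0$, whence $\dim H^0(X,\llcorner mD\lrcorner+A)=\chi(X,\llcorner mD\lrcorner+A)$, which by asymptotic Riemann--Roch is governed by the expansion of $(mD+A)^n$, whose coefficient of $m^k$ is a positive multiple of $D^k\cdot A^{n-k}$. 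Using the standard fact that for a nef divisor $D^k\not\equiv0$ is equivalent to $D^k\cdot A^{n-k}>0$, the top surviving power of $m$ is exactly $\nu_0$, so $\nu(D)=\sigma(D,A)=\nu_0$. The points demanding the most care are the control of higher cohomology via Fujita vanishing and this numerical positivity fact for nef classes; the round-down in the $\mathbb R$-Cartier case perturbs only lower-order terms and leaves the leading power intact.
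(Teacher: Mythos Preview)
The paper does not give its own argument: it simply refers to Nakayama's book \cite[V, 2.7, Proposition]{N}. So your proposal is not a variant of the paper's proof but an independent one, and the relevant question is only whether it is correct.

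Your arguments for (1) and (2) are sound. Two small points are worth tightening. First, in the easy direction $\nu(D)\le\nu(\varphi^*D)$, once you have $(f)_Y+m\varphi^*D+\varphi^*A\ge 0$ with $(f)_Y$ and $\varphi^*A$ integral, you get $(f)_Y+\llcorner m\varphi^*D\lrcorner+\varphi^*A\ge 0$ directly (an $\mathbb R$-divisor $\ge$ an integral one forces the floor $\ge$ that integral divisor), so the bookkeeping with fractional parts is unnecessary. Second, in the push-forward step your sentence ``$\varphi_*A_Y$ is dominated by some ample $A''$'' is true but not automatic when $X$ is merely normal: one should remark that for any effective Weil divisor $W$ on $X$ one can find an effective Cartier divisor in a sufficiently ample linear system containing $W$ with the required multiplicities, which gives the desired $A''\ge\varphi_*A_Y$.

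For (3) your outline is correct, but the appeal to Fujita vanishing is imprecise as stated: Fujita's theorem gives uniform vanishing for twists by \emph{nef} Cartier divisors, whereas $\llcorner mD\lrcorner$ need not be nef. The fix is exactly the boundedness observation you already use for Riemann--Roch: writing $\llcorner mD\lrcorner=mD-\{mD\}$ with $\{mD\}$ lying in the fixed polytope $\{\sum t_jD_j:0\le t_j\le 1\}$, one chooses $A$ so that $A-K_X-E$ is ample for every $E$ in this polytope (finitely many vertex conditions). Then $\llcorner mD\lrcorner+A-K_X=mD+(A-K_X-\{mD\})$ is nef plus ample, hence ample, and ordinary Kodaira vanishing kills $H^i$ for $i>0$ uniformly in $m$. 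With this adjustment your asymptotic Riemann--Roch computation and the standard positivity fact $D^k\not\equiv 0\Leftrightarrow D^k\cdot A^{n-k}>0$ for nef $D$ give $\sigma(D,A)=\nu_0$ for such $A$; monotonicity in $A$ then yields $\nu(D)=\nu_0$.
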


\begin{proof}See \cite[V, 2.7, Proposition]{N}.
\end{proof}

\begin{lem}[{\cite[V, 2.7, Proposition, (1)]{N}}]\label{invarinat} Let $X$ be a projective variety and $D$ and $D'$ $\mathbb{R}$-Cartier divisors on $X$ such that $D \equiv D'$. Then $\nu(D)=\nu(D')$. 

\end{lem}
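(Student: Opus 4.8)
The plan is to show that both $\sigma(-,A)$ (for a suitable ample $A'$) and hence $\nu$ depend only on the numerical class, via a two-step comparison. Since $\sigma(D,A)\in\{-\infty\}\cup\{0,1,\dots,\dim X\}$ is bounded, the maximum defining $\nu(D)=\max_A\sigma(D,A)$ is attained, so by the symmetry between $D$ and $D'$ it suffices to prove $\nu(D)\le\nu(D')$. Concretely, fixing an ample Cartier divisor $A$ with $\sigma(D,A)=\nu(D)$, I would produce an ample $A'$ with $\sigma(D',A')\ge\sigma(D,A)$, which yields $\nu(D')\ge\nu(D)$.

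Set $N=D-D'$, so that $N\equiv 0$ by hypothesis. Writing $\llcorner mD\lrcorner=mD-\{mD\}$ and likewise for $D'$, the fractional parts $\{mD\}$ and $\{mD'\}$ are supported on the fixed reduced divisor $B$ whose support is $\Supp(D)\cup\Supp(D')$, with coefficients in $[0,1)$. Hence $\llcorner mD\lrcorner\le mD=mD'+mN$ and $\llcorner mD'\lrcorner\ge mD'-B$, which combine to the divisor inequality $\llcorner mD\lrcorner+A\le \llcorner mD'\lrcorner+(A+B+mN)$. Rounding down the right-hand side and using that an integral divisor bounded above by an $\mathbb{R}$-divisor $L$ has all its sections contained in $H^0(X,\llcorner L\lrcorner)$, I obtain $h^0(X,\llcorner mD\lrcorner+A)\le h^0(X,\llcorner mD'\lrcorner+A+B+\llcorner mN\lrcorner)$, in which $A+B$ is a fixed integral divisor.

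It remains to absorb the error term $\llcorner mN\lrcorner$ into a single fixed ample $A'$, and this is precisely where numerical triviality is genuinely used. Although $mN$ is unbounded as a divisor, its class vanishes in $N^1(X)$, so the classes $[\llcorner mN\lrcorner]=-[\{mN\}]$ lie in a bounded region of $N^1(X)$ and therefore realize only finitely many values in the finitely generated lattice $\NS(X)$. Fixing integral representatives $F_1,\dots,F_s$ of these classes, one writes $\llcorner mN\lrcorner=F_{i(m)}+P_m$ with $P_m\in\Pic^{\tau}(X)$ numerically trivial. Choosing $A'$ ample so that $A'-A-B-F_i$ is ample for each of the finitely many $i$, the perturbation by $P_m$ moves the relevant line bundle only within the proper (hence bounded) family $\Pic^{\tau}(X)$, so a semicontinuity and asymptotic Riemann--Roch argument shows that $h^0(X,\llcorner mD'\lrcorner+A+B+\llcorner mN\lrcorner)$ and $h^0(X,\llcorner mD'\lrcorner+A')$ have the same order of polynomial growth in $m$. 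Together with the previous step this gives $\sigma(D,A)\le\sigma(D',A')$, hence $\nu(D)\le\nu(D')$, and symmetry yields equality.

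The main obstacle is exactly this last comparison: one must control $h^0$ along a family of line bundles that is unbounded among honest divisors yet bounded in $N^1(X)$. The boundedness of $\Pic^{\tau}(X)$ together with the finite generation of $\NS(X)$ is what makes the polynomial growth order insensitive to the numerically trivial perturbation $mN$, and this is the technical heart of the statement, corresponding to Nakayama's Proposition V.2.7 to which the assertion reduces.
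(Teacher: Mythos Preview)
The paper does not supply its own proof of this lemma; it is stated with a bare citation to \cite[V,~2.7,~Proposition~(1)]{N}, so there is no argument in the paper to compare yours against. Your overall strategy is the standard one and is essentially how Nakayama proceeds: reduce by symmetry to $\nu(D)\le\nu(D')$, bound $h^0(\llcorner mD\lrcorner+A)$ above by $h^0(\llcorner mD'\lrcorner+A+B+\llcorner mN\lrcorner)$ via the elementary divisor inequalities you wrote, observe that the numerical classes of the integral divisors $\llcorner mN\lrcorner=-\{mN\}$ are bounded, and then absorb the error into a single ample $A'$.

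The one soft spot is the final absorption step. Appealing to ``semicontinuity and asymptotic Riemann--Roch'' does not quite do it: upper semicontinuity of $h^0$ over $\Pic^{\tau}(X)$ bounds $h^0(L+P)$ for fixed $L$ by its value at some special $P_0$ depending on $L$, which is useless once $L=\llcorner mD'\lrcorner+\cdots$ varies with $m$; and Riemann--Roch controls $\chi$, not $h^0$, with no vanishing of higher cohomology available here. Your last paragraph then effectively defers this point back to Nakayama~V.2.7, which is the very statement under discussion. What actually closes the argument is a direct absorption: since $\Pic^{\tau}(X)$ is a bounded family, there is a single ample Cartier divisor $H$ on $X$ such that $|H+Q|\ne\emptyset$ for every $Q\in\Pic^{\tau}(X)$ (e.g.\ choose $H$ so that the higher direct images of the twisted Poincar\'e bundle vanish, whence $h^0(H+Q)=\chi(H)>0$ for all $Q$). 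Applying this with $Q=-P_m$ yields an effective member of $|H-P_m|$, hence $h^0(L+P_m)\le h^0(L+H)$ for every integral Weil divisor $L$. Taking $A'$ ample with $A'-(A+B+F_i+H)$ linearly equivalent to an effective divisor for each of the finitely many $i$ then gives $h^0(\llcorner mD\lrcorner+A)\le h^0(\llcorner mD'\lrcorner+A')$ for all $m$, and your conclusion follows. With this correction your sketch becomes a complete proof along Nakayama's lines.
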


\begin{rem}\label{nakayama} $\nu(D)$ is denoted as $\kappa_{\sigma}(D)$ in \cite[V, $\S$ 2]{N}. Moreover Nakayama also defined $\kappa_{\sigma}^-(D)$, $\kappa^+_{\sigma}(D)$ and $\kappa_{\nu}(D)$ as some numerical Iitaka dimensions. In this article we mainly treat in the case where $\nu(D)=0$, i.e. $\kappa_{\sigma}(D)=0$. Then it holds that  $\kappa_{\sigma}^-(D)=\kappa^+_{\sigma}(D)=\kappa_{\nu}(D)=0$ (cf. \cite[V, 2.7, Proposition (8)]{N}). Moreover, if  a log canonical pair $(X,\Delta)$ has a {\em weakly log canonical model} in the sense of Shokurov, then $\nu(K_X+\Delta)$ coincides with the {\em numerical log Kodaira dimension} in the sense of Shokurov by Lemma \ref{pro-nu-dimension} (cf. \cite[2.4, Proposition]{sho}). 
 
\end{rem}

\begin{defi}\label{base locus}Let $\pi:X \to S$ be a projective morphism of normal quasi-projective varieties and $D$ an $\mathbb{R}$-Cartier divisor on $X$. We set 
$$\mathbf{B}_{\equiv}(D/S)=\bigcap_{D \equiv_{S} E \geq 0} \mathrm{Supp}\,E.$$
When $S=\mathrm{Spec}\,\mathbb{C}$, we denote simply $\mathbf{B}_{\equiv}(D)$.
\end{defi}

We introduce a \emph{dlt blow-up}. The following theorem was originally proved by Professor 
Christopher Hacon (cf.~\cite[Theorem 10.4]{F-fund}, 
\cite[Theorem 3.1]{kk}). 
For a simpler proof, see \cite[Section 4]{F-ss}: 

\begin{thm}[Dlt blow-up]\label{dltblowup}
Let $X$ be a normal quasi-projective variety and 
$\Delta$ an effective $\mathbb{R}$-divisor on $X$ such 
that $K_X+\Delta$ is $\mathbb{R}$-Cartier. Suppose that $(X,\Delta)$ is log canonical.
Then there exists a projective birational 
morphism $\varphi:Y\to X$ from a normal quasi-projective 
variety with the following properties: 
\begin{itemize}
\item[(i)] $Y$ is $\mathbb Q$-factorial, 
\item[(ii)] $a(E, X, \Delta)= -1$ for every  
$\varphi$-exceptional divisor $E$ on $Y$, and
\item[(iii)] for $$
\Gamma=\varphi^{-1}_*\Delta+\sum _{E: {\text{$\varphi$-exceptional}}}E, 
$$ it holds that  $(Y, \Gamma)$ is dlt and $K_Y+\Gamma=\varphi^*(K_X+\Delta)$.
\end{itemize}

\end{thm}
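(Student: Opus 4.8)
The plan is to prove the dlt blow-up (Theorem \ref{dltblowup}) by first passing to a log resolution and then running a carefully chosen minimal model program to contract exactly the divisors we do not want, while forcing the remaining exceptional divisors to be log canonical places. Concretely, I would begin by taking a log resolution $f\colon Y_0 \to X$ of the pair $(X,\Delta)$ such that the union of the strict transform of $\Delta$ and the $f$-exceptional locus is a simple normal crossing divisor, and such that $Y_0$ is $\mathbb{Q}$-factorial (a resolution automatically gives smoothness, hence $\mathbb{Q}$-factoriality). Let $E_1,\dots,E_r$ be the $f$-exceptional prime divisors. Since $(X,\Delta)$ is log canonical we have $a(E_i,X,\Delta)\ge -1$ for all $i$. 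I would then set
\[
\Gamma_0 = f^{-1}_*\Delta + \sum_{i=1}^r E_i,
\]
so that by construction $(Y_0,\Gamma_0)$ is log smooth, hence dlt, and writing $K_{Y_0}+\Gamma_0 = f^*(K_X+\Delta)+F$ we find that $F = \sum_i (1+a(E_i,X,\Delta))E_i$ is an effective $f$-exceptional $\mathbb{R}$-divisor supported on exactly those $E_i$ with discrepancy strictly greater than $-1$.

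The key idea is to eliminate $F$ by running a relative minimal model program over $X$. The natural tool here is the machinery of Birkar--Cascini--Hacon--M\textsuperscript{c}Kernan reviewed in Section \ref{section3}: because $(Y_0,\Gamma_0)$ is dlt (so in particular we may perturb to a klt pair, or work directly in the dlt category with the relevant BCHM results) and $F$ is effective and $f$-exceptional, I would run a $(K_{Y_0}+\Gamma_0)$-MMP over $X$. Since
\[
K_{Y_0}+\Gamma_0 \equiv_X F \ge 0
\]
with $F$ exceptional over $X$, this MMP terminates with a relative minimal model $\varphi\colon Y\to X$ on which the pushforward of $F$ has been contracted. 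The essential point, which I expect to invoke, is a negativity-type argument: running the MMP for a divisor that is numerically equivalent over the base to an effective exceptional divisor contracts precisely the support of that exceptional divisor. Thus on the output $Y$ every remaining $\varphi$-exceptional divisor satisfies $a(E,X,\Delta)=-1$, giving property (ii), while $\mathbb{Q}$-factoriality (i) and the dlt property of $(Y,\Gamma)$ in (iii) are preserved by the MMP.

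The main obstacle, and the step requiring the most care, is guaranteeing that the MMP actually terminates and contracts \emph{all} of $\operatorname{Supp} F$ and \emph{nothing else}. Termination in this generality is exactly what BCHM provides (the MMP with scaling for a divisor bounded below by an effective one over the base), so I would lean on the results collected in Section \ref{section3}; the subtlety is that $\Delta$ is only an $\mathbb{R}$-divisor, so I must ensure the relevant termination and base-point-freeness statements apply in the $\mathbb{R}$-coefficient dlt setting. To see that the output contracts exactly $F$, the cleanest argument is the negativity lemma: since $K_Y+\Gamma = \varphi^*(K_X+\Delta)$ must hold on the minimal model (the relative canonical divisor is $\varphi$-nef and numerically $\varphi$-trivial, being equivalent to the pushforward of an exceptional effective divisor, which the negativity lemma forces to be zero), every surviving exceptional divisor is forced to be a log canonical place. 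Verifying that no divisor with discrepancy exactly $-1$ gets contracted — i.e. that we retain all log canonical places — follows because such divisors are not in the support of $F$ and hence are never $(K_{Y_0}+\Gamma_0)$-negative in the steps of the program. Finally, setting $\Gamma = \varphi^{-1}_*\Delta + \sum_{E:\,\varphi\text{-exceptional}} E$ yields the stated formula and completes the proof.
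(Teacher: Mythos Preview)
The paper does not supply its own proof of Theorem \ref{dltblowup}; immediately before the statement it attributes the result to Hacon and refers the reader to \cite[Theorem 10.4]{F-fund}, \cite[Theorem 3.1]{kk}, and \cite[Section 4]{F-ss}. Your outline---take a log resolution, form the log smooth pair $(Y_0,\Gamma_0)$ with $K_{Y_0}+\Gamma_0=f^*(K_X+\Delta)+F$ for an effective $f$-exceptional $F$, run a $(K_{Y_0}+\Gamma_0)$-MMP over $X$, and use the negativity lemma on the output to force the pushforward of $F$ to vanish---is precisely the argument carried out in those references, so your proposal is aligned with what the paper invokes.

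Two small corrections. First, the termination step needs a word more than ``lean on the results collected in Section \ref{section3}'': Theorem \ref{bchm} assumes the boundary contains a $\pi$-ample part, which $\Gamma_0$ does not obviously do. Over a \emph{birational} base this is easily arranged (e.g.\ perturb to a klt pair with a small $f$-ample contribution, or appeal directly to the BCHM statements for birational morphisms), but you should say which device you use, especially since $\Delta$ has $\mathbb{R}$-coefficients. Second, your closing claim that no exceptional divisor with $a(E,X,\Delta)=-1$ is ever contracted is neither needed nor correctly argued: the theorem only demands that the \emph{surviving} $\varphi$-exceptional divisors satisfy (ii), and ``$E\notin\Supp\,F$'' does not by itself prevent $E$ from being the exceptional divisor of some step of the MMP. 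You can simply delete that sentence; once the pushforward of $F$ is zero, conditions (i)--(iii) follow immediately from preservation of $\mathbb{Q}$-factoriality and dlt-ness under the MMP.
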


The above theorem is very useful for studying log canonical singularities (cf. \cite{F-fund},  \cite{g}, \cite{kk}).\\

\section{Log minimal model program with scaling}\label{section3} In this section, we review a {\em log minimal model program with scaling} and introduce works by Birkar--Cascini--Hacon--$\mathrm{M^{c}}$Kernan.

\begin{lem}[cf. {\cite[Lemma 2.1]{b1}} and {\cite[Theorem 18.9]{F-fund}}]\label{scale}Let $\pi:X \to S$ be a projective morphism of normal quasi-projective varieties and $(X,\Delta)$ a $\mathbb{Q}$-factorial projective log canonical pair such that $\Delta$ is a $\mathbb{K}$-divisor. Let $H$ be an effective $\mathbb{Q}$-divisor such that $K_X+\Delta+H$ is $\pi$-nef and $(X,\Delta+H)$ is log canonical. Suppose that $K_X+\Delta$ is not $\pi$-nef. We put
$$\lambda=\inf\{\alpha \in \mathbb{R}_{\geq0}| K_X+\Delta+\alpha H\ \text{is\ $\pi$-nef} \}
.$$

Then  $\lambda \in \mathbb{K}_{>0}$ and there exists an extremal ray $R \subseteq \overline{NE}(X/S)$ such that $(K_X+\Delta).R <0$ and $(K_X+\Delta+\lambda H).R=0$.

\end{lem}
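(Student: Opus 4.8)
The plan is to obtain the extremal ray directly from the Cone Theorem for log canonical pairs, after first checking that the threshold $\lambda$ is attained and lies in $(0,1]$.

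First I would record the elementary properties of $\lambda$. For a fixed curve $C$ in a fibre of $\pi$, the condition $(K_X+\Delta+\alpha H)\cdot C\ge 0$ defines a closed half-line (possibly all of $\mathbb{R}$, or empty) in the variable $\alpha$; intersecting over all such $C$ shows that $N=\{\alpha\ge 0\mid K_X+\Delta+\alpha H\ \text{is}\ \pi\text{-nef}\}$ is closed. It is nonempty since $1\in N$, and $0\notin N$ because $K_X+\Delta$ is not $\pi$-nef; moreover $N$ is an interval, because for $\alpha=t\alpha_1+(1-t)\alpha_2$ one has $K_X+\Delta+\alpha H=t(K_X+\Delta+\alpha_1H)+(1-t)(K_X+\Delta+\alpha_2H)$, a convex combination of $\pi$-nef classes. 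Hence $\lambda=\min N$ is attained, $0<\lambda\le 1$, and $K_X+\Delta+\lambda H$ is $\pi$-nef while $K_X+\Delta+\alpha H$ fails to be $\pi$-nef for every $\alpha<\lambda$.

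Next I would invoke the Cone Theorem for $\mathbb{Q}$-factorial projective log canonical pairs \cite{F-fund} relative to $\pi$, which gives
$$\overline{NE}(X/S)=\overline{NE}(X/S)_{K_X+\Delta\ge 0}+\sum_j R_j,$$
where the $R_j=\mathbb{R}_{\ge0}[C_j]$ are the $(K_X+\Delta)$-negative extremal rays, each spanned by a curve with $0<-(K_X+\Delta)\cdot C_j\le 2\dim X$, and these rays are discrete in the half-space $\{(K_X+\Delta)<0\}$. For each $j$ set $\lambda_j=-(K_X+\Delta)\cdot C_j/(H\cdot C_j)$. Since $K_X+\Delta+H$ is $\pi$-nef we have $H\cdot C_j\ge -(K_X+\Delta)\cdot C_j>0$, so $\lambda_j\in(0,1]$ is well defined and is exactly the value at which $(K_X+\Delta+\alpha H)\cdot C_j=0$. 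Testing $\pi$-nefness of $K_X+\Delta+\alpha H$ on the displayed decomposition — the non-negative part contributes $\ge 0$ for every $\alpha\in[0,1]$ by the same convex-combination argument, since there $(K_X+\Delta+\alpha H)\cdot w=(1-\alpha)(K_X+\Delta)\cdot w+\alpha(K_X+\Delta+H)\cdot w\ge 0$ — one checks that $K_X+\Delta+\alpha H$ is $\pi$-nef precisely when $\alpha\ge\lambda_j$ for all $j$; therefore $\lambda=\sup_j\lambda_j$.

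The heart of the matter, and the step I expect to be the main obstacle, is to show that this supremum is attained by an actual ray, i.e. that there is $j_0$ with $\lambda_{j_0}=\lambda$, equivalently $(K_X+\Delta+\lambda H)\cdot R_{j_0}=0$. Writing $F=(K_X+\Delta+\lambda H)^{\perp}\cap\overline{NE}(X/S)$ for the face cut out by the nef class $K_X+\Delta+\lambda H$ (nonzero, since $\lambda$ is minimal), I must exhibit a $(K_X+\Delta)$-negative extremal ray lying in $F$. The danger is an accumulation of rays $R_j$ with $\lambda_j\nearrow\lambda$ whose curve classes escape to infinity, so that no single ray realizes the threshold; ruling this out is exactly where the finiteness built into the Cone Theorem (as in the cited \cite{b1}, \cite{F-fund}) is needed. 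Concretely I would combine the length bound $-(K_X+\Delta)\cdot C_j\le 2\dim X$ with the discreteness of the $R_j$ and a compact slice $\{A_0=1\}\cap\overline{NE}(X/S)$ for an auxiliary $\pi$-ample $A_0$: passing to a limiting direction $v\in F$ of the competing rays and decomposing it along the cone, the relation $(K_X+\Delta+\lambda H)\cdot v=0$ forces a summand $R_{j_0}\subseteq F$ with $(K_X+\Delta)\cdot R_{j_0}<0$, which is the desired ray. Granting $R=R_{j_0}$, the inequalities $(K_X+\Delta)\cdot R<0$ and $(K_X+\Delta+\lambda H)\cdot R=0$ hold by construction, and reading off $\lambda=-(K_X+\Delta)\cdot C_{j_0}/(H\cdot C_{j_0})$ shows $\lambda\in\mathbb{K}_{>0}$: it is visibly positive and, when $\mathbb{K}=\mathbb{Q}$, rational because $K_X+\Delta$ and $H$ are $\mathbb{Q}$-Cartier. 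This completes the proof.
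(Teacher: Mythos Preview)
The paper does not prove this lemma at all; it simply records the statement and defers to \cite[Lemma 2.1]{b1} and \cite[Theorem 18.9]{F-fund}. Your proposal is essentially the argument one finds in those references: establish $0<\lambda\le 1$ with the infimum attained, invoke the cone theorem for log canonical pairs, rewrite $\lambda=\sup_j\lambda_j$ over the $(K_X+\Delta)$-negative extremal rays, and then show the supremum is a maximum. Your derivation of $\lambda\in\mathbb K_{>0}$ from the explicit formula $\lambda=-(K_X+\Delta)\cdot C_{j_0}/H\cdot C_{j_0}$ is also the standard one.

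The one soft spot is the attainment sketch. As written, the limiting argument does not quite close: discreteness of the extremal rays holds only in the open half-space $\{(K_X+\Delta)<0\}$, so a sequence of distinct rays $R_{j_n}$ with $\lambda_{j_n}\to\lambda$, normalized to a compact slice, may well converge to a class $v$ with $(K_X+\Delta)\cdot v=0$. In that case ``decomposing $v$ along the cone'' gives $v\in\overline{NE}(X/S)_{K_X+\Delta\ge 0}$ and produces no $(K_X+\Delta)$-negative summand at all. What the cited proofs actually do is combine the length bound with the finiteness clause of the cone theorem relative to an auxiliary ample class (only finitely many $R_j$ meet $\{(K_X+\Delta+\epsilon A)\le 0\}$) to force the competing rays into a finite set directly, rather than passing to a limit. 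You already flag this step as the main obstacle and point to \cite{b1} and \cite{F-fund} for it, so the proposal is acceptable as an outline, but you should know that the compactness heuristic you wrote down is not by itself sufficient.
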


\begin{defi}[Log minimal model program with scaling]\label{LMMPS}Let $\pi:X \to S$ be a projective morphism of normal quasi-projective varieties and $(X,\Delta)$ a $\mathbb{Q}$-factorial projective divisorial log terminal pair such that $\Delta$ is a $\mathbb{K}$-divisor. Let $H$ be an effective $\mathbb{K}$-divisor such that $K_X+\Delta+H$ is $\pi$-nef and $(X,\Delta+H)$ is divisorial log terminal. We put
$$\lambda_1=\inf\{\alpha \in \mathbb{R}_{\geq0}| K_X+\Delta+\alpha H\ \text{is\ $\pi$-nef} \}
.$$ If $K_X+\Delta$ is not $\pi$-nef, then $\lambda_1>0.$
By Lemma \ref{scale}, there exists an extremal ray $R_1 \subseteq \overline{NE}(X/S)$ such that $(K_X+\Delta).R_1 <0$ and $(K_X+\Delta+\lambda_1 H).R_1=0$. We consider an extremal contraction with respect to this $R_1$. If it is a divisorial contraction or a flipping contraction, let 
$$(X,\Delta) \dashrightarrow (X_1,\Delta_1)$$
 be the divisorial contraction or its flip. Since $K_{X_1}+\Delta_1+\lambda_1H_1$ is $\pi$-nef, we put 
$$\lambda_2=\inf\{\alpha \in \mathbb{R}_{\geq0}| K_{X_1}+\Delta_1+\alpha H_1\ \text{is\ $\pi$-nef} \}
,$$
where $H_1$ is the strict transform of $H$ on $X_1$. Then we find an extremal ray $R_2$ by the same way as the above. We may repeat the process. We call this program a {\em log minimal model program with scaling of} $H$ over $S$. When this program runs as the following:
$$(X_0,\Delta_0)=(X,\Delta) \dashrightarrow (X_1,\Delta_1) \dashrightarrow \cdots \dashrightarrow (X_i, \Delta_i) \cdots, 
$$
then 
$$\lambda_1 \geq \lambda_2 \geq \lambda_3 \dots,
$$
where $\lambda_i=\inf\{\alpha \in \mathbb{R}_{\geq0}| K_{X_{i-1}}+\Delta_{i-1}+\alpha H_{i-1}\ \text{is\ $\pi$-nef} \}$ and $H_{i-1}$ is the strict transform of $H$ on $X_{i-1}$.
\end{defi}

The following theorems are slight generalizations of \cite[Corollary 1.4.1]{BCHM} and \cite[Corollary 1.4.2]{BCHM}. These seem to be  well-known for the experts.  

\begin{thm}[cf. {\cite[Corollary 1.4.1]{BCHM}}] Let $\pi:X \to S$ be a projective morphism of normal quasi-projective varieties and $(X,\Delta)$ be a $\mathbb{Q}$-factorial projective divisorial log terminal pair such that $\Delta$ is an $\mathbb{R}$-divisor. Suppose that $\varphi:X \to Y$ is a flipping contraction of $(X,\Delta)$. Then there exists the log flip of $\varphi$.

\end{thm}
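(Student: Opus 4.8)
The plan is to reduce the existence of the flip to the klt, $\mathbb{Q}$-factorial case settled in \cite[Corollary 1.4.1]{BCHM}, exploiting the fact that a flip of a small contraction with relative Picard number one depends only on the contracted extremal ray and not on the particular boundary. First I record the structure of the contraction: since $\varphi\colon X\to Y$ is a flipping contraction of $(X,\Delta)$, it is a small projective birational morphism with $\rho(X/Y)=1$, it contracts a single extremal ray $R\subseteq \overline{NE}(X/Y)$, and $-(K_X+\Delta)$ is $\varphi$-ample, so $(K_X+\Delta)\cdot R<0$.

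The first step is to replace the dlt $\mathbb{R}$-boundary $\Delta$ by a klt boundary with the same negativity on $R$. Because $(X,\Delta)$ is dlt the coefficients of $\Delta$ lie in $[0,1]$, so $\llcorner\Delta\lrcorner$ is the reduced sum of its coefficient-one components. I set $\Delta'=\Delta-\varepsilon\llcorner\Delta\lrcorner$ for a small $\varepsilon\in(0,1)$. Using the characterization of dlt pairs (there is a closed subset $Z\subseteq X$ off which $(X,\Delta)$ is log smooth, and every divisor $E$ over $X$ with center in $Z$ has $a(E,X,\Delta)>-1$), one checks that $(X,\Delta')$ is klt: off $Z$ the pair is log smooth with all coefficients strictly less than one, while for $E$ centered in $Z$ one has $a(E,X,\Delta')\geq a(E,X,\Delta)>-1$ since $\llcorner\Delta\lrcorner$ is effective. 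Moreover $(K_X+\Delta')\cdot R=(K_X+\Delta)\cdot R-\varepsilon\,\llcorner\Delta\lrcorner\cdot R<0$ for $\varepsilon$ small, because $(K_X+\Delta)\cdot R<0$ strictly. Hence $\varphi$ is also a flipping contraction for the $\mathbb{Q}$-factorial klt pair $(X,\Delta')$ (if one needs a $\mathbb{Q}$-boundary, one further perturbs the remaining coefficients down to nearby rationals, preserving kltness and the strict inequality on $R$).

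Now \cite[Corollary 1.4.1]{BCHM} applies to $(X,\Delta')$ and produces the flip $\varphi^+\colon X^+\to Y$: a small birational morphism from a $\mathbb{Q}$-factorial variety, isomorphic to $X$ in codimension one, with $K_{X^+}+(\Delta')^+$ being $\varphi^+$-ample, where $(\Delta')^+$ is the strict transform. Since $\varphi^+$ is small and $X^+$ is $\mathbb{Q}$-factorial, $\rho(X^+/Y)=1$, and strict transform identifies $N^1(X/Y)\cong\mathbb{R}$ with $N^1(X^+/Y)\cong\mathbb{R}$. I claim the same $\varphi^+$ is the flip of $(X,\Delta)$, i.e. that $K_{X^+}+\Delta^+$ is $\varphi^+$-ample. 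Indeed, in $N^1(X/Y)$ both $K_X+\Delta$ and $K_X+\Delta'$ are negative on $R$, hence positive real multiples of one another; writing $K_X+\Delta\equiv_Y\mu(K_X+\Delta')$ with $\mu>0$, the difference is numerically trivial over $Y$, and this relation is preserved by strict transform to $X^+$. Therefore $K_{X^+}+\Delta^+\equiv_Y\mu(K_{X^+}+(\Delta')^+)$, which is $\varphi^+$-ample, and since $\rho(X^+/Y)=1$ relative ampleness is detected by the sign on the contracted ray. This gives the flip of $\varphi$ for $(X,\Delta)$.

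The essential mathematical difficulty, finite generation of the relative log canonical algebra, is entirely absorbed into \cite[Corollary 1.4.1]{BCHM}; the work left here is the reduction. Among its steps, the one demanding the most care is the last, namely verifying that the numerical proportionality $K_X+\Delta\equiv_Y\mu(K_X+\Delta')$ over $Y$ transports through the flip to give $K_{X^+}+\Delta^+\equiv_Y\mu(K_{X^+}+(\Delta')^+)$; this rests on the flip being an isomorphism in codimension one together with $\rho(X^+/Y)=1$, so that numerical equivalence over $Y$ of strict transforms is governed by intersection with the single contracted ray.
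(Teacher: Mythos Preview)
Your argument is correct and follows essentially the same route as the paper: subtract $\varepsilon\lfloor\Delta\rfloor$ to pass to a klt boundary, invoke \cite[Corollary 1.4.1]{BCHM}, and then use $\rho(X/Y)=1$ to see that the resulting flip also serves as the flip for $(X,\Delta)$. The only notable difference is that the paper phrases the proportionality as $K_X+\Delta\sim_{\mathbb{R},Y} c\,(K_X+\Delta-\varepsilon\lfloor\Delta\rfloor)$ rather than merely $\equiv_Y$; since $\varphi$ is the contraction of an extremal ray for a dlt pair, any divisor numerically trivial over $Y$ is the pullback of an $\mathbb{R}$-Cartier divisor from $Y$, so the two formulations are equivalent, and the $\sim_{\mathbb{R},Y}$ version makes the transport to $X^+$ immediate without the extra discussion in your final paragraph.
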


\begin{proof} Since $-(K_X+\Delta)$ is $\varphi$-ample, so is $-(K_X+\Delta-\epsilon \llcorner \Delta \lrcorner)$ for a sufficiently small $\epsilon>0$. Because $\rho(X/Y)=1$, it holds that $K_X+\Delta \sim_{\mathbb{R},Y} c (K_X+\Delta-\epsilon \llcorner \Delta \lrcorner)$ for some positive number $c$. By \cite[Corollary 1.4.1]{BCHM}, there exists the log flip of $(X,\Delta-\epsilon \llcorner \Delta \lrcorner)$. This log flip is also the log flip of $(X,\Delta)$ since $K_X+\Delta \sim_{\mathbb{R},Y} c (K_X+\Delta-\epsilon \llcorner \Delta \lrcorner)$.

\end{proof}

\begin{thm}[cf. {\cite[Corollary 1.4.2]{BCHM}}]\label{bchm} Let $\pi:X \to S$ be a projective morphism of normal quasi-projective varieties and $(X,\Delta)$ be a $\mathbb{Q}$-factorial projective divisorial log terminal pair such that $\Delta$ is an $\mathbb{R}$-divisor. Suppose that there exists a $\pi$-ample $\mathbb{R}$-divisor $A$ on $X$ such that $\Delta \geq A$. Then any sequences of log minimal model program starting from $(X, \Delta)$ with scaling of $H$ over $S$ terminate, where $H$ satisfies that $(X,\Delta+H)$ is divisorial log terminal and $K_X+\Delta+H$ is $\pi$-nef.

\end{thm}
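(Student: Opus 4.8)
The plan is to reduce the statement to the klt case established in \cite[Corollary 1.4.2]{BCHM} by perturbing the dlt boundary into a klt one while preserving the inequality ``$\geq$ a $\pi$-ample divisor'', and then to identify the given log minimal model program with scaling as a program to which that corollary applies.

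First I would produce the klt model. Since $(X,\Delta)$ is dlt, for all sufficiently small $\epsilon>0$ the pair $(X,\Delta-\epsilon\llcorner\Delta\lrcorner)$ is klt. Because $\pi$-ampleness is an open condition in the real N\'eron--Severi space, the $\mathbb{R}$-divisor $A-\epsilon\llcorner\Delta\lrcorner$ remains $\pi$-ample for $\epsilon$ small; setting $\Theta=\Delta-\epsilon\llcorner\Delta\lrcorner$ we obtain
$$\Theta=(\Delta-A)+(A-\epsilon\llcorner\Delta\lrcorner)\geq A-\epsilon\llcorner\Delta\lrcorner=:A',$$
so $(X,\Theta)$ is a $\mathbb{Q}$-factorial projective klt pair with $\Theta\geq A'$ for a $\pi$-ample $\mathbb{R}$-divisor $A'$. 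Taking $G:=H+\epsilon\llcorner\Delta\lrcorner$ one checks $K_X+\Theta+G=K_X+\Delta+H$ is $\pi$-nef and $(X,\Theta+G)=(X,\Delta+H)$, which is klt after shrinking $\epsilon$ further; hence $(X,\Theta)$ together with $G$ satisfies the hypotheses of \cite[Corollary 1.4.2]{BCHM}, and every log minimal model program starting from $(X,\Theta)$ with scaling of $G$ over $S$ terminates.

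Next I would match the two programs. At the $i$-th step the given program contracts an extremal ray $R_i\subseteq\overline{NE}(X_{i-1}/S)$ with $(K_{X_{i-1}}+\Delta_{i-1})\cdot R_i<0$ and $(K_{X_{i-1}}+\Delta_{i-1}+\lambda_iH_{i-1})\cdot R_i=0$. Writing $\Theta_{i-1}$, $G_{i-1}$ for the strict transforms, one computes
$$K_{X_{i-1}}+\Theta_{i-1}+\alpha G_{i-1}=(K_{X_{i-1}}+\Delta_{i-1}+\alpha H_{i-1})+(\alpha-1)\,\epsilon\llcorner\Delta_{i-1}\lrcorner,$$
so at the value $\alpha=1$ the two scaled divisors coincide and are $\pi$-nef. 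Since each contraction has relative Picard number one, the classes restricted to $R_i$ are determined up to scaling, and the dlt property—hence the coherence of the round-down $\llcorner\Delta_{i-1}\lrcorner$—is preserved at every step, so the perturbation propagates along the program. The goal is to conclude that the given sequence is, step by step, a $(K_{X_{i-1}}+\Theta_{i-1})$-program with scaling of $G_{i-1}$, whose termination is guaranteed above.

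The main obstacle is exactly this matching. A priori one may have $\llcorner\Delta_{i-1}\lrcorner\cdot R_i>0$ while $(K_{X_{i-1}}+\Delta_{i-1})\cdot R_i$ is only slightly negative, so that a single $\epsilon$, and a single scaling divisor $G$, need not simultaneously record the same contracted rays and the same nef thresholds for all $i$ as the program proceeds. To overcome this I would invoke the boundedness of lengths of extremal rays from the cone theorem, together with the monotonicity $\lambda_1\geq\lambda_2\geq\cdots$ of the thresholds, to confine the relevant intersection numbers and to justify a uniform choice of $\epsilon$ along the whole program; alternatively one bypasses the ray-by-ray comparison altogether by appealing directly to the finiteness-of-models mechanism underlying \cite[Corollary 1.4.2]{BCHM}, which is insensitive to the dlt-versus-klt distinction once the $\pi$-ample summand $A'$ is present. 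Either route yields termination of the original program, and the passage from $\mathbb{Q}$- to $\mathbb{R}$-coefficients is handled by the openness of the $\pi$-ample cone used already in the first step.
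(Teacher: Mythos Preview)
There is a slip in your first paragraph: you assert that $(X,\Theta+G)$ becomes klt for small $\epsilon$, but $\Theta+G=\Delta-\epsilon\llcorner\Delta\lrcorner+H+\epsilon\llcorner\Delta\lrcorner=\Delta+H$ is independent of $\epsilon$, and by hypothesis this pair is only dlt. Thus \cite[Corollary~1.4.2]{BCHM}, which is stated for klt pairs and for scaling by an \emph{ample} divisor, does not apply directly to $(X,\Theta)$ with scaling of $G$. More seriously, the step-by-step matching is a genuine gap that your first fix does not close. From
\[
K_{X_{i-1}}+\Theta_{i-1}+\alpha G_{i-1}=(K_{X_{i-1}}+\Delta_{i-1}+\alpha H_{i-1})+(\alpha-1)\,\epsilon\,\llcorner\Delta_{i-1}\lrcorner
\]
one sees that the nef thresholds and the selected rays for the two scalings differ by a term governed by $\llcorner\Delta_{i-1}\lrcorner\cdot R_i$. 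The cone theorem bounds $-(K_{X_{i-1}}+\Delta_{i-1})\cdot C$ on extremal curves, but says nothing about $\llcorner\Delta_{i-1}\lrcorner\cdot C$; since the $\lambda_i$ may tend to $0$ along an infinite sequence of flips, no single $\epsilon$ can be chosen uniformly to force the two programs to contract the same rays in the same order.

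Your second fix is the correct route, and it is exactly what the paper does. The paper gives no detailed argument: it simply remarks that the proof of \cite[Corollary~1.4.2]{BCHM} goes through verbatim because \cite[Theorem~E]{BCHM} holds in this setting. The point is that finiteness of models requires only that the boundaries in the relevant segment be of the form $A+B$ with $A$ $\pi$-ample and $(X,A+B)$ log canonical; the hypothesis $\Delta\geq A$ supplies this along all of $\{\Delta+tH:0\leq t\leq 1\}$, and dlt versus klt is irrelevant here. Once finiteness of models is available, termination of the MMP with scaling follows exactly as in BCHM, with no perturbation of the boundary and no ray-by-ray comparison needed.
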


The above theorem is proved by the same argument as the proof of \cite[Corollary1.4.2]{BCHM} because \cite[Theorem E]{BCHM} holds on the above setting.  

\section{Divisorial Zariski decomposition}\label{section4} In this section, we introduce the {\em divisorial Zariski decomposition} for a pseudo-effective divisor.
\begin{defi}[cf. {\cite[III, 1.13, Definition]{N}} and \cite{kawamata_crepant}]\label{movile_limit} Let $\pi:X \to S$ be a projective morphism of normal quasi-projective varieties and $D$ an $\mathbb{R}$-Cartier divisor. We call that $D$ is a {\em limit of movable $\mathbb{R}$-divisors} over $S$ if $[D] \in \overline{\mathrm{Mov}}(X/S) \subseteq N^{1}(X/S)$ where $\overline{\mathrm{Mov}}(X/S)$ is the closure of the convex cone spanned by classes of fixed part free $\mathbb{Z}$-Cartier divisors over $S$.  When $S=\mathrm{Spec}\,\mathbb{C}$, we denote simply $\overline{\mathrm{Mov}}(X)$.

\end{defi}

\begin{defi}[cf. {\cite[III, 1.6, Definition and 1.12, Definition]{N}}]\label{Z-decomp}Let $X$ be a smooth projective variety and $B$ a big $\mathbb{R}$ divisor. We define 
$$ \sigma_{\Gamma}(B)=\inf\{\mathrm{mult}_{\Gamma}B'|B \equiv B'\geq0\}
$$
for a prime divisor $\Gamma$. Let $D$ be a pseudo-effective divisor. Then we define the following:

$$ \sigma_{\Gamma}(D)=\lim_{\epsilon \to 0 +}\sigma_{\Gamma} (D+\epsilon A)
$$
for some ample divisor $A$. We remark that $\sigma_{\Gamma}(D)$ is independent of the choice of $A$. Moreover the above two definitions coincide for a big divisor because a function $\sigma_{\Gamma}( \cdot )$
 on $\mathrm{Big}(X)$ is continuous where $\mathrm{Big}(X):=\{[B] \in \mathrm{N^1}(X)| \text{$B$\ is\ big}\}$ (cf. \cite[III, 1.7, Lemma]{N}). We set 
 $$N(D)=\sum_{\text{$\Gamma$:prime\ divisor}} \sigma_{\Gamma}(D) \Gamma\ \text{and}\ P(D)=D-N(D).
 $$
 We remark that $N(D)$ is a finite sum. We call the decomposition $D = P(D) + N(D)$ the {\em  divisorial Zariski decomposition} of $D$. We say that $P(D)$ (resp. $N(D)$) is the {\em positive part} (resp. {\em negative part}) of $D$. \end{defi}

Remark that the decomposition $D\equiv P(D)+N(D)$ is called several names: the {\em sectional decomposition} (\cite{kawamata_crepant}), the $\sigma$-{\em decomposition} (\cite{N}), the {\em divisorial Zariski decomposition} (\cite{bouck_dzd}), and the {\em numerical Zariski decomposition} (\cite{k-ab}).

\begin{prop}\label{nu=0}Let $X$ be a smooth projective variety and $D$ a pseudo-effective $\mathbb{R}$-divisor on $X$. Then it holds the following:

\begin{itemize}
\item[(1)] $\sigma_{\Gamma}(D)=\lim_{\epsilon \to 0+}\sigma_{\Gamma}(D+\epsilon E)$ for a pseudo-effective divisor $E$, and
\item[(2)] $\nu(D)=0$ if and only if $D \equiv N(D)$. 
\end{itemize}

\end{prop}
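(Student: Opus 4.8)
The plan is to treat the two statements separately, basing part~(1) entirely on the elementary functional properties of $\sigma_\Gamma$ and reducing part~(2) to the structure of the negative part via numerical invariance. The properties I would take from \cite[III, 1.6--1.8]{N} are: subadditivity $\sigma_\Gamma(F_1+F_2)\le \sigma_\Gamma(F_1)+\sigma_\Gamma(F_2)$ on the pseudo-effective cone, positive homogeneity, the vanishing $\sigma_\Gamma(B)=0$ on nef classes, and the defining continuity relation $\sigma_\Gamma(D)=\lim_{t\to 0+}\sigma_\Gamma(D+tA)$ for an ample $A$.

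For part~(1), fix an ample divisor $A$ and establish the two inequalities. The upper bound is immediate from subadditivity and homogeneity, giving $\sigma_\Gamma(D+\epsilon E)\le \sigma_\Gamma(D)+\epsilon\,\sigma_\Gamma(E)$ and hence $\limsup_{\epsilon\to 0+}\sigma_\Gamma(D+\epsilon E)\le \sigma_\Gamma(D)$. For the lower bound, the key observation is that, since $E$ is pseudo-effective and $A$ is ample, the openness of the ample cone supplies a fixed $t_0>0$ with $t_0A-E$ ample. Setting $\delta=t_0\epsilon$ makes $\delta A-\epsilon E=\epsilon(t_0A-E)$ ample, so subadditivity together with the vanishing of $\sigma_\Gamma$ on nef classes yields
$$\sigma_\Gamma(D+\delta A)\le \sigma_\Gamma(D+\epsilon E)+\sigma_\Gamma(\delta A-\epsilon E)=\sigma_\Gamma(D+\epsilon E).$$
Since $\delta=t_0\epsilon\to 0$ as $\epsilon\to 0+$, the left-hand side tends to $\sigma_\Gamma(D)$ by the defining continuity relation, giving $\sigma_\Gamma(D)\le \liminf_{\epsilon\to 0+}\sigma_\Gamma(D+\epsilon E)$. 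The one delicate point is the coupled choice $\delta=t_0\epsilon$: it must shrink fast enough to recover $\sigma_\Gamma(D)$ in the limit, yet stay large enough relative to $\epsilon$ to keep $\delta A-\epsilon E$ nef.

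For part~(2), the implication $D\equiv N(D)\Rightarrow \nu(D)=0$ proceeds by Lemma~\ref{invarinat}, which gives $\nu(D)=\nu(N(D))$, so it suffices to show $\nu(N(D))=0$. Here I would use that the negative part is $\sigma$-fixed, $\sigma_\Gamma(N(D))=\mult_\Gamma N(D)$ with $P(N(D))=0$, so $N(D)$ carries no movable part; this rigidity keeps $\dim H^0(X,\llcorner mN(D)\lrcorner+A)$ bounded in $m$ for every ample $A$, whence $\sigma(N(D),A)=0$ and $\nu(N(D))=0$. For the converse, assume $\nu(D)=0$ and write $D=P(D)+N(D)$. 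I would first reduce to the positive part by checking $\nu(P(D))=\nu(D)$: every effective divisor numerically equivalent to $\llcorner mD\lrcorner+A$ must contain the $\sigma$-fixed part, so the sections of $\llcorner mD\lrcorner+A$ and of $\llcorner mP(D)\lrcorner+A$ grow at the same rate. It then remains to deduce $P(D)\equiv 0$ from $\nu(P(D))=0$.

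This last step is the main obstacle. The positive part $P(D)$ is a limit of movable divisors but need not be nef, so Lemma~\ref{pro-nu-dimension}(3) cannot be applied directly to conclude that vanishing numerical dimension forces numerical triviality; indeed the entire content of the converse is precisely that a nonzero class which is nef in codimension one has $\nu\ge 1$. I would settle this by invoking Nakayama's analysis relating $\kappa_\sigma$ to the $\sigma$-decomposition in \cite[V, 2.7, Proposition]{N}, where the movable case is reduced to the genuinely nef situation. The comparison between movability and nefness, unavailable for $P(D)$ on $X$ itself, is where the real work and the dependence on Nakayama's theory reside.
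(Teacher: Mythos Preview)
The paper's own proof consists solely of two citations to Nakayama's book: \cite[III, 1.4, Lemma]{N} for (1) and \cite[V, 2.7, Proposition (8)]{N} for (2). Your treatment of (1) is a correct, self-contained unpacking of exactly the argument behind \cite[III, 1.4]{N}---subadditivity for the upper bound and the comparison $\sigma_\Gamma(D+t_0\epsilon A)\le \sigma_\Gamma(D+\epsilon E)$ for the lower bound---so here you and the paper follow the same route, only you spell it out. For (2) your plan is more ambitious than the paper's but does not actually gain independence from the same citation: the two intermediate claims you rely on, namely that $h^0(\llcorner mN(D)\lrcorner+A)$ stays bounded and that $\nu(D)=\nu(P(D))$, are themselves pieces of \cite[V, 2.7, Proposition]{N} and your sketched justifications (``rigidity keeps the sections bounded'', ``every effective divisor must contain the $\sigma$-fixed part'') are not complete arguments on their own. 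Since you explicitly fall back on \cite[V, 2.7, Proposition]{N} for the hard direction anyway, your proof of (2) is in the end the same citation as the paper's, and the extra scaffolding neither fills a gap nor introduces one.
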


\begin{proof} (1) follows from \cite[III, 1.4, Lemma]{N}. (2) follows from \cite[V, 2.7, Proposition (8)]{N}.
\end{proof}

\section{Existence of minimal models in the case $\nu=0$}\label{section5}

\begin{thm}[cf. {\cite[Corollaire 3.4]{d}}]\label{termination} Let $X$ be a $\mathbb{Q}$-factorial projective variety and $\Delta$ an effective $\mathbb{R}$-divisor such that $(X,\Delta)$ is divisorial log terminal. Suppose that $\nu(K_X+\Delta)=0$. Then any log minimal model programs starting from $(X, \Delta)$ with scaling of $H$ terminate, where $H$ satisfies that $H\geq A$ for some effective $\mathbb{R}$-ample divisor $A$, $(X,\Delta+H)$ is divisorial log terminal, and $K_X+\Delta+H$ is nef.
\end{thm}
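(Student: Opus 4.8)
The plan is to run the log minimal model program with scaling of $H$ and derive a contradiction from the assumption that it does not terminate. So suppose we have an infinite sequence
$$(X_0,\Delta_0)=(X,\Delta) \dashrightarrow (X_1,\Delta_1) \dashrightarrow \cdots \dashrightarrow (X_i,\Delta_i) \dashrightarrow \cdots,$$
with the associated scaling numbers $\lambda_1 \geq \lambda_2 \geq \cdots > 0$ as in Definition \ref{LMMPS}. The key numerical input is the hypothesis $\nu(K_X+\Delta)=0$, which by Lemma \ref{invarinat} is preserved under the birational maps of the program (each step is $(K+\Delta)$-nonpositive and an isomorphism in codimension one for flips, so the numerical class and hence $\nu$ is controlled). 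The strategy is to use this to show that after the first step the divisor $K_X+\Delta$ becomes, up to numerical equivalence, supported on the negative part $N(K_X+\Delta)$, by Proposition \ref{nu=0}(2).

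First I would reduce to studying $\lambda = \lim_{i\to\infty}\lambda_i$. If $\lambda>0$, then for all $i$ the divisor $K_{X_i}+\Delta_i+\lambda H_i$ is nef and the pair $(X_i,\Delta_i+\lambda H_i)$ is dlt; moreover $\Delta_i+\lambda H_i \geq \lambda A_i$ where $A_i$ is the transform of the ample divisor $A$ with $H\geq A$. This puts us exactly in the situation of Theorem \ref{bchm} (with the boundary containing an ample $\mathbb{R}$-divisor), so the program with scaling terminates, contradicting infinitude. Hence the real case to handle is $\lambda=0$, i.e. the scaling numbers strictly decrease to zero; this is where the $\nu=0$ hypothesis must do the work, since we no longer have an ample piece in the boundary to invoke Theorem \ref{bchm} directly.

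For the case $\lambda=0$, the idea is to exploit the divisorial Zariski decomposition. Since $\nu(K_X+\Delta)=0$, passing to a dlt blow-up (Theorem \ref{dltblowup}) and a resolution where the decomposition of Definition \ref{Z-decomp} is defined, Proposition \ref{nu=0}(2) gives $K_X+\Delta \equiv N(K_X+\Delta)$, an effective divisor numerically fixed. I would argue that each flip and divisorial contraction of the program can only decrease the number of components, or the total multiplicity, of the negative part $N(K_{X_i}+\Delta_i)$: a $(K+\Delta)$-negative extremal contraction must contract a divisor lying in the support of $N$, since $P(K+\Delta)\equiv 0$ forces all of $K_{X_i}+\Delta_i$ into its negative part. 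Because $N$ has only finitely many components with nonnegative multiplicities, this descent cannot continue indefinitely, so only finitely many divisorial contractions occur, and after these the program consists of flips only. Terminating the remaining flips is the final step, and this is where I expect the main obstacle to lie.

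The hard part will be ruling out an infinite sequence of flips when $\lambda=0$: termination of flips is not known in general, so I cannot simply appeal to it. The mechanism I would rely on is that along the numerically trivial (mod $N$) situation the flips must strictly improve some discrete invariant attached to $N(K_{X_i}+\Delta_i)$ — for instance the number of prime components of the negative part, or a difficulty-type function built from the discrepancies along the centers contracted. Making this invariant strictly monotone and bounded, using that $P(K_{X_i}+\Delta_i)\equiv 0$ so that every flipping curve meets $\operatorname{Supp} N(K_{X_i}+\Delta_i)$, is the technical crux. This is precisely Druel's mechanism in \cite[Corollaire 3.4]{d}, and the work is in transporting it from the klt to the $\mathbb{Q}$-factorial dlt setting, which is legitimate here because Theorem \ref{dltblowup} and the generalized \cite{BCHM} results quoted in Theorem \ref{bchm} are available for dlt pairs.
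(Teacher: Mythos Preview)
Your reduction to the case $\lambda=0$ is correct in spirit, though the justification is slightly off: after flips and divisorial contractions the strict transform $A_i$ of $A$ need not be ample, so you cannot invoke Theorem~\ref{bchm} on $X_i$. The paper's argument is that when $\lambda>0$ the entire sequence is simultaneously a $(K_X+\Delta+\tfrac{1}{2}\lambda H)$-MMP with scaling \emph{starting from $(X,\Delta+\tfrac{1}{2}\lambda H)$}, and on the original $X$ the boundary satisfies $\Delta+\tfrac{1}{2}\lambda H\geq \tfrac{1}{2}\lambda A$ with $A$ ample there; so Theorem~\ref{bchm} applies at the start. This is a minor repair.

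The real problem is your handling of $\lambda=0$. You propose to track a discrete invariant of $N(K_{X_i}+\Delta_i)$ through the flips and hope it strictly decreases, but you do not construct such an invariant, and you yourself flag this as ``the hard part'' and ``the main obstacle.'' This is a genuine gap: no termination-of-flips argument is carried out, and there is no indication that a difficulty function built from $N$ alone would work in the dlt setting.

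The paper avoids termination of flips altogether, and the mechanism is quite different from what you sketch. After passing to the tail where only flips remain, the facts that $K_{X_i}+\Delta_i+\lambda_i H_i$ is nef for every $i$, that flips are isomorphisms in codimension one, and that $\lambda_i\to 0$, together force $K_X+\Delta$ to be a \emph{limit of movable $\mathbb{R}$-divisors} (Claim~\ref{cl2}, citing \cite{b2} and \cite{F-ss}). Now take a log resolution $\varphi:Y\to X$ and form the divisorial Zariski decomposition $\varphi^*(K_X+\Delta)=P+N$. Since $\nu=0$, Proposition~\ref{nu=0}(2) gives $P\equiv 0$. If $N$ had a non-$\varphi$-exceptional component, then by Proposition~\ref{nu=0}(1) the locus $\mathbf{B}_{\equiv}(K_X+\Delta+\epsilon G)$ would contain a divisor for small $\epsilon>0$ and ample $G$, contradicting movability. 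Hence $N$ is $\varphi$-exceptional, and pushing forward yields $K_X+\Delta\equiv\varphi_*N=0$. Thus $K_X+\Delta$ is already nef on the model where the infinite flip sequence was supposed to begin --- a direct contradiction, obtained without ever terminating the flips.
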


\begin{proof} Let $(X,\Delta) \dashrightarrow (X_1,\Delta_1)$ be a divisorial contraction or a log flip. Remark that it holds that
$$\nu(K_X+\Delta)=\nu(K_{X_1}+\Delta_1)$$
 from Lemma \ref{pro-nu-dimension} (1) and the negativity lemma. Now we run a log minimal model program 
 $$(X_i,\Delta_i) \dashrightarrow (X_{i+1},\Delta_{i+1})
$$
 starting from $ (X_0,\Delta_0)=(X, \Delta)$ with scaling of $H$. Assume by contradiction that this program does not terminate.
 Let $\{\lambda_i\}$ be as in Definition \ref{LMMPS}. We set  
$$\lambda=\lim_{i\to \infty}\lambda_i.
$$
If $\lambda \not =0$, the sequence is composed by $(K_X+\Delta+\frac{1}{2}\lambda H)$-log minimal model program. Thus the sequence terminates by Theorem \ref{bchm}. Therefore we see that $\lambda =0$. Now there exists $j$ such that $(X_i,\Delta_i) \dashrightarrow (X_{i+1},\Delta_{i+1})$
 is a log flip for any $i\geq j$. Replace  $(X,\Delta)$ by $(X_j,\Delta_j)$, we lose the fact that $A$ is ample. Then we see the following:

\begin{cl}\label{cl2}
$K_X+\Delta$ is a limit of movable $\mathbb{R}$-divisors.
\end{cl}

\begin{proof}[Proof of Claim \ref{cl2}]
See \cite[Step 2 of the proof of Theorem 1.5]{b2} or \cite[Theorem 2.3]{F-ss}.
\end{proof}

Let $\varphi: Y \to X$ be a log resolution of $(X,\Delta)$. We consider the divisorial Zariski decomposition 
$$\varphi^*(K_X+\Delta) = P(\varphi^*(K_X+\Delta))+N(\varphi^*(K_X+\Delta))$$
 (Definition \ref{Z-decomp}). Since 
$$\nu(\varphi^*(K_X+\Delta))=\nu(K_X+\Delta)=0,$$
 we see $P(\varphi^*(K_X+\Delta)) \equiv 0$ by Proposition \ref{nu=0} (2). Moreover we see the following claim:

\begin{cl}\label{cl1} 
$N(\varphi^*(K_X+\Delta))$ is a $\varphi$-exceptional divisor.
\end{cl}
\begin{proof}[Proof of Claim \ref{cl1}]
Let $G$ be an ample divisor on $X$ and $\epsilon$ a sufficiently small positive number. By Proposition \ref{nu=0} (1), it holds that
$$\mathrm{Supp}\,N(\varphi^*(K_X+\Delta)) \subseteq \mathrm{Supp}\,N(\varphi^*(K_X+\Delta+\epsilon G)).$$ 
If it holds that $\varphi_*(N(\varphi^*(K_X+\Delta))) \not =0$, we see that $\mathbf{B}_{\equiv}(K_X+\Delta+\epsilon G)$ has codimension $1$ components. This is a contradiction to Claim \ref{cl2}. Thus $N(\varphi^*(K_X+\Delta))$ is a $\varphi$-exceptional divisor.
\end{proof}
Hence $K_X+\Delta \equiv 0$, in particular, $K_X+\Delta$ is nef. This is a contradiction to the assumption.  
\end{proof}

\begin{cor}\label{mm} Let $X$ be a $\mathbb{Q}$-factorial projective variety and $\Delta$ an effective $\mathbb{R}$-divisor such that $(X,\Delta)$ is divisorial log terminal. Suppose that $\nu(K_X+\Delta)=0$. Then there exists a log minimal model of $(X,\Delta)$.

\end{cor}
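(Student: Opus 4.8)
The plan is to reduce the statement to Theorem~\ref{termination}: it suffices to produce a boundary $H$ meeting the hypotheses of that theorem, run a log minimal model program with scaling of $H$, and read off the minimal model from its (guaranteed) termination. If $K_X+\Delta$ is already nef there is nothing to prove, since $(X,\Delta)$ is then its own log minimal model; so I assume $K_X+\Delta$ is not nef.

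The heart of the argument is the construction of $H$. Fix a very ample Cartier divisor $A_0$ and an integer $m\gg 0$ large enough that $K_X+\Delta+mA_0$ is ample, and let $H$ be a general member of the (very ample, hence base point free) linear system $|mA_0|$. I claim $H$ satisfies all three requirements of Theorem~\ref{termination}. First, $(X,\Delta+H)$ is dlt: fixing a log resolution $f\colon W\to X$ of $(X,\Delta)$ that exhibits the dlt property, a general $H$ meets the strata of $f$ transversally and its total transform contains no $f$-exceptional divisor, so $a(E,X,\Delta+H)=a(E,X,\Delta)>-1$ for every $f$-exceptional $E$; since moreover $\Delta$ and $H$ share no component and $H$ is reduced, the coefficients of $\Delta+H$ lie in $[0,1]$, and the same $f$ witnesses that $(X,\Delta+H)$ is dlt. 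Second, $H\sim mA_0$ gives $K_X+\Delta+H\sim_{\mathbb R}K_X+\Delta+mA_0$, which is ample and in particular nef. Third, setting $A=\tfrac12 H$, an effective ample $\mathbb R$-divisor, we have $H-A=\tfrac12 H\geq 0$, so $H\geq A$.

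With this $H$ I run a log minimal model program with scaling of $H$ starting from $(X,\Delta)$, as in Definition~\ref{LMMPS}; at each step the required divisorial contraction or flip exists (flips for $\mathbb Q$-factorial dlt pairs being provided by the results recalled just before Theorem~\ref{bchm}), and $\mathbb Q$-factoriality together with the dlt property is preserved throughout. By Theorem~\ref{termination} this program terminates, yielding a birational contraction $(X,\Delta)\dashrightarrow (X_m,\Delta_m)$ onto a $\mathbb Q$-factorial dlt pair with $\Delta_m$ the strict transform of $\Delta$ and $K_{X_m}+\Delta_m$ nef. Since every step is $(K+\Delta)$-negative, the negativity lemma gives the strict discrepancy inequality along the exceptional locus of $X\dashrightarrow X_m$, so $(X_m,\Delta_m)$ is a log minimal model of $(X,\Delta)$.

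The only genuine content lies in choosing $H$, and the single delicate point there is reconciling the two competing demands on it: nefness of $K_X+\Delta+H$ forces $H$ to be positive, while preserving the dlt property and the coefficient bound forces $H$ to be tame. The resolution is that one general member of a sufficiently large free linear system is at once ample-enough and transverse-enough to meet both demands. After that the proof is a purely formal invocation of Theorem~\ref{termination}, so I anticipate no further obstacle.
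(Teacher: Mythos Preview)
Your proposal is correct and is precisely the intended argument: the paper states the corollary without proof, as it follows at once from Theorem~\ref{termination} after one manufactures a suitable scaling divisor $H$, which you do via a general member of a sufficiently ample free linear system. The only point you leave implicit is that termination yields a minimal model rather than a Mori fiber space; this is because $\nu(K_X+\Delta)=0$ forces $K_X+\Delta$ to be pseudo-effective, a property preserved along the MMP and incompatible with a Mori fiber space structure.
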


\begin{rem}\label{rel_rem}These results are on the absolute setting. It may be difficult to extends these to the relative settings. See \cite{F-ss}. 

\end{rem}

\section{Abundance theorem in the case $\nu=0$}\label{section6} In this section, we prove the abundance theorem in the case where $\nu=0$ for an $\mathbb{R}$-divisor:

\begin{thm}\label{nonvani_lc} Let $X$ be a normal projective variety  and $\Delta$ an effective $\mathbb{R}$-divisor.
Suppose that $(X, \Delta)$ is a log canonical pair such that $\nu(K_X+\Delta)=0$. Then $\nu(K_X+\Delta)=K(K_X+\Delta)$. Moreover, if $\Delta$ is a $\mathbb{Q}$-divisor, then $\nu(K_X+\Delta)=\kappa(K_X+\Delta)=K(K_X+\Delta)$.
\end{thm}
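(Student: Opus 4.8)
The plan is to reduce the log canonical case to the dlt (indeed $\mathbb{Q}$-factorial dlt) case, where the results of Section \ref{section5} apply, and then to extract the abundance statement from the structure of a minimal model with $\nu=0$. First I would apply the dlt blow-up (Theorem \ref{dltblowup}) to $(X,\Delta)$, obtaining a projective birational $\varphi:Y\to X$ with $Y$ being $\mathbb{Q}$-factorial, $(Y,\Gamma)$ dlt, and $K_Y+\Gamma=\varphi^*(K_X+\Delta)$. By Lemma \ref{pro-nu-dimension}(2) and Lemma \ref{invarinat} we have $\nu(K_Y+\Gamma)=\nu(\varphi^*(K_X+\Delta))=\nu(K_X+\Delta)=0$, so the hypotheses of the $\nu=0$ theory are preserved. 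Moreover, since $K(\cdot)$ is preserved under pullback by birational morphisms (this is the invariant Iitaka dimension analogue of Lemma \ref{pro-kappa-dimension}(2), using $K_Y+\Gamma=\varphi^*(K_X+\Delta)$), it suffices to prove $K(K_Y+\Gamma)=0$, i.e. the $\mathbb{Q}$-factorial dlt case.

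So I would assume $(X,\Delta)$ is $\mathbb{Q}$-factorial dlt with $\nu(K_X+\Delta)=0$. By Corollary \ref{mm} there exists a log minimal model $(X_m,\Delta_m)$ of $(X,\Delta)$; let $\psi:X\dashrightarrow X_m$ be the associated birational map. On the minimal model $K_{X_m}+\Delta_m$ is nef, and since $\nu$ is a birational-and-numerical invariant preserved along the steps of the program (as recorded in the proof of Theorem \ref{termination}, via Lemma \ref{pro-nu-dimension}(1) and the negativity lemma), we get $\nu(K_{X_m}+\Delta_m)=0$. Now I would invoke Lemma \ref{pro-nu-dimension}(3): for the nef divisor $K_{X_m}+\Delta_m$, the equality $\nu=0$ means $(K_{X_m}+\Delta_m)^k\not\equiv 0$ fails for all $k\ge 1$, i.e. already $(K_{X_m}+\Delta_m)\equiv 0$. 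Thus $K_{X_m}+\Delta_m$ is a nef divisor numerically trivial.

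The key point is then to pass from numerical triviality to the vanishing of the invariant Iitaka dimension. When $\Delta_m$ is a $\mathbb{Q}$-divisor, $K_{X_m}+\Delta_m\equiv 0$ together with the basepoint-free/abundance machinery for nef and numerically trivial log canonical divisors (here one uses that numerically trivial implies, for a $\mathbb{Q}$-divisor on a log canonical pair, $\mathbb{Q}$-linear triviality, cf. the log abundance for $\nu=0$ in the references \cite{BCHM}, \cite{g}) gives $K_{X_m}+\Delta_m\sim_{\mathbb{Q}}0$, whence $\kappa(K_{X_m}+\Delta_m)=K(K_{X_m}+\Delta_m)=0$; for a general $\mathbb{R}$-divisor one gets $K_{X_m}+\Delta_m\sim_{\mathbb{R}}0$ and so $K(K_{X_m}+\Delta_m)=0$. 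Finally, since $K(\cdot)$ is a birational invariant of the pair under the minimal model map (again because discrepancies only decrease and $K$ is computed from sections of an effective representative, invoking Lemma \ref{pro-kappa-dimension}), this transfers back to $K(K_X+\Delta)=0=\nu(K_X+\Delta)$, and in the $\mathbb{Q}$-divisor case $\kappa(K_X+\Delta)=0$ as well. The main obstacle I anticipate is precisely this last implication, namely upgrading $K_{X_m}+\Delta_m\equiv 0$ to $\sim_{\mathbb{R}}0$ (or $\sim_{\mathbb{Q}}0$); this is where one must appeal to the nonvanishing-type input from \cite{g} for log canonical pairs rather than to any purely numerical argument, and care is needed to handle the full dlt (non-klt) boundary correctly.
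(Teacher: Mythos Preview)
Your proposal is correct and follows essentially the same route as the paper: dlt blow-up, then Corollary \ref{mm} to get a minimal model, then Lemma \ref{pro-nu-dimension}(3) to obtain $K_{X_m}+\Delta_m\equiv 0$, then the input from \cite{g} (packaged in the paper as Lemma \ref{g_K}) to upgrade this to $\sim_{\mathbb{K}}0$, and finally Lemma \ref{pro-kappa-dimension} to transfer $K=0$ back to $(X,\Delta)$. The obstacle you flag is exactly the content of Lemma \ref{g_K}, which handles the $\mathbb{R}$-case by a Shokurov polytope reduction to the $\mathbb{Q}$-case of \cite[Theorem 1.2]{g}.
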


First we extends \cite[Theorem 1.2]{g} to an $\mathbb{R}$-divisor.
 
\begin{lem}[cf. {\cite[Theorem 3.1]{fg}}]\label{g_K}Let $X$ be a normal projective variety  and $\Delta$ an effective $\mathbb{K}$-divisor.
Suppose that $(X, \Delta)$ is a log canonical pair such that $K_X+\Delta \equiv 0$. Then $K_X+\Delta \sim_{\mathbb{K}}0$

\end{lem}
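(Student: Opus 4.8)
The plan is to deduce the $\mathbb{R}$-divisor case from the already-known $\mathbb{Q}$-divisor case, which is precisely \cite[Theorem 1.2]{g}. Thus when $\mathbb{K}=\mathbb{Q}$ there is nothing to prove, and we may assume $\Delta$ is an $\mathbb{R}$-divisor. The mechanism is a standard rational-polytope argument: I would realize $\Delta$ as a convex combination of effective $\mathbb{Q}$-divisors $\Delta_k$ for which $(X,\Delta_k)$ is again log canonical and $K_X+\Delta_k\equiv 0$, apply the rational case to each $\Delta_k$, and then recombine. To make the $\mathbb{Q}$-Cartier bookkeeping painless, I would first pass to a dlt blow-up $\varphi\colon Y\to X$ as in Theorem \ref{dltblowup}, so that $Y$ is $\mathbb{Q}$-factorial, $(Y,\Gamma)$ is dlt and $K_Y+\Gamma=\varphi^*(K_X+\Delta)\equiv 0$ with $\Gamma$ effective. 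Since $\varphi_*(K_Y+\Gamma)=K_X+\Delta$ and pushforward preserves $\sim_{\mathbb{R}}$, it suffices to prove $K_Y+\Gamma\sim_{\mathbb{R}}0$; so I may assume from the start that $X$ is $\mathbb{Q}$-factorial and $(X,\Delta)$ is dlt.

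Next I would set up the polytope. Let $V$ be the finite-dimensional $\mathbb{R}$-vector space of $\mathbb{R}$-divisors supported on $\Supp\,\Delta$; because $X$ is $\mathbb{Q}$-factorial every element of $V$ is $\mathbb{R}$-Cartier, and every rational element is $\mathbb{Q}$-Cartier. Inside $V$ consider $\mathcal{P}=\{D\in V\mid D\ge 0,\ (X,D)\ \text{is log canonical}\}$, which is a rational polytope: after fixing a log resolution the log canonical condition is a finite system of linear inequalities with rational coefficients in the coefficients of $D$. Consider also the affine subspace $L=\{D\in V\mid K_X+D\equiv 0\}$. The assignment $D\mapsto [K_X+D]\in N^1(X)_{\mathbb{R}}$ is affine-linear and defined over $\mathbb{Q}$, since $[K_X]\in N^1(X)_{\mathbb{Q}}$ ($X$ being $\mathbb{Q}$-factorial) and $N^1(X)$ carries its natural rational structure; hence $L$ is a rational affine subspace. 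Therefore $\mathcal{P}\cap L$ is a rational polytope containing $\Delta$, and $\Delta$ is a convex combination $\Delta=\sum_k r_k\Delta_k$ of its rational vertices $\Delta_k$, with $r_k>0$ and $\sum_k r_k=1$. Each $\Delta_k$ is then an effective $\mathbb{Q}$-divisor with $(X,\Delta_k)$ log canonical and $K_X+\Delta_k\equiv 0$.

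Finally I would invoke \cite[Theorem 1.2]{g} for each rational pair to obtain $K_X+\Delta_k\sim_{\mathbb{Q}}0$, write each such equivalence explicitly as a rational combination of principal divisors, and take the corresponding real combination $K_X+\Delta=\sum_k r_k(K_X+\Delta_k)$; since $\sum_k r_k=1$ this exhibits $K_X+\Delta$ as an $\mathbb{R}$-linear combination of principal divisors, i.e. $K_X+\Delta\sim_{\mathbb{R}}0$, and pushing forward along $\varphi$ yields the claim on $X$. I expect the main obstacle to be the verification that $\mathcal{P}\cap L$ is genuinely a \emph{rational} polytope containing $\Delta$, that is, that the numerically trivial constraint and the log canonical constraint are both cut out over $\mathbb{Q}$ and meet at $\Delta$, since this is exactly what permits approximating the real boundary $\Delta$ by log canonical, numerically trivial $\mathbb{Q}$-boundaries. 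The reduction to the $\mathbb{Q}$-factorial dlt setting via Theorem \ref{dltblowup} and the rationality of the numerical equivalence class $[K_X]$ in $N^1(X)$ are the two facts that make this argument go through.
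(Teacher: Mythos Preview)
Your argument is correct and follows the same overall strategy as the paper: pass to a $\mathbb{Q}$-factorial dlt model via Theorem \ref{dltblowup}, realize $\Delta$ as a convex combination of rational boundaries $\Delta_k$ lying in a suitable rational polytope, and apply \cite[Theorem 1.2]{g} to each $(X,\Delta_k)$. The only difference is which polytope is used. The paper works with Shokurov's polytope $\mathcal{N}=\{B\in\mathcal{L}\mid K_X+B\ \text{is nef}\}$, whose rationality is quoted from \cite[Proposition 3.2]{b2} and \cite[6.2]{sho}, and then notes that $\sum_i r_i(K_X+\Delta_i)\equiv 0$ with each $K_X+\Delta_i$ nef forces each $K_X+\Delta_i\equiv 0$. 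You instead intersect $\mathcal{L}$ directly with the affine subspace $L=\{D\mid K_X+D\equiv 0\}$, whose rationality you obtain from $[K_X]\in N^1(X)_{\mathbb{Q}}$ on the $\mathbb{Q}$-factorial model. Your route is mildly more elementary in that it bypasses the rationality of the nef polytope; the paper's route trades that elementary check for a citation but otherwise the two proofs are the same.
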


\begin{proof} By taking a dlt blow-up (Theorem \ref{dltblowup}), we may assume that $(X,\Delta)$ is a $\mathbb{Q}$-factorial  dlt pair.  If $\mathbb K=\mathbb Q$, then 
the statement is nothing but \cite[Theorem 1.2]{g}. 
From now on, we assume that 
$\mathbb K=\mathbb R$. 
Let $\sum _i B_i$ be the irreducible decomposition of $\Supp\,\Delta$. 
We put $V=\underset{i}{\bigoplus}  \mathbb RB_i$. 
Then it is well known that 
$$
\mathcal L=\{ B\in V\, | \, (X, B)\ \text{is log canonical}\}
$$ 
is a rational polytope in $V$. 
We can also check that 
$$
\mathcal N=\{ B\in \mathcal L\, |\, K_X+B\ \text{is nef}\}
$$ 
is a rational polytope and $\Delta \in \mathcal N$ (cf. \cite[Proposition 3.2]{b2} and \cite[6.2 First Main theorem]{sho}). 
We note that $\mathcal N$ is known as Shokurov's polytope. 
Therefore, we can write 
$$
K_X+\Delta=\sum _{i=1}^k r_i (K_X+\Delta_i)
$$ 
such that 
\begin{itemize}
\item[(i)] $\Delta_i$ is an effective $\mathbb Q$-divisor such that $\Delta_i \in \mathcal{N}$ for 
every $i$, 
\item[(ii)] $(X, \Delta_i)$ is log canonical for every 
$i$, and 
\item[(iii)] $0<r_i<1$, $r_i\in \mathbb R$ for every $i$, and 
$\sum _{i=1}^k r_i =1$. 
\end{itemize}
Since $K_X+\Delta$ is numerically trivial 
and $K_X+\Delta_i$ is nef for every $i$, 
$K_X+\Delta_i$ is numerically trivial for every $i$. By \cite[Theorem 1.2]{g}, we see that $K_X+\Delta \sim_{\mathbb{R}}0$.

\end{proof}

\begin{proof}[Proof of Theorem \ref{nonvani_lc}] By taking a dlt blow-up (Theorem \ref{dltblowup}), we may assume that $(X,\Delta)$ is a $\mathbb{Q}$-factorial  dlt pair. By Corollary \ref{mm}, there exists a log minimal model $(X_m,\Delta_m)$ of $(X,\Delta)$. From Lemma \ref{pro-nu-dimension} (3), it holds that $K_{X_m}+\Delta_{m} \equiv 0$. By Lemma \ref{g_K}, it holds that $K(K_{X_m}+\Delta_m)=0$. Lemma \ref{pro-kappa-dimension} implies that $K(K_{X}+\Delta)=0$. If $\Delta$ is a $\mathbb{Q}$-divisor, then there exists an effective $\mathbb{Q}$-divisor $E$ such that $K_X+\Delta \sim_{\mathbb{Q}}E$ by Corollary \ref{mm} and Lemma \ref{g_K}. Thus we see that $\kappa(K_X+\Delta)=K(K_X+\Delta)$. We finish the proof of Theorem \ref{nonvani_lc}.

\end{proof}

\begin{cor}\label{rel-nonvani2}Let $\pi: X\to S$ be a projective surjective morphism of normal quasi-projective varieties, and let $(X,\Delta)$ be a projective log canonical pair such that $\Delta$ is an effective $\mathbb{K}$-divisor. Suppose that $\nu(K_F+\Delta_F)=0$ for a general fiber $F$, where $K_F+\Delta_F=(K_X+\Delta)|_{F}$. Then there exists an effective $\mathbb{K}$-divisor $D$ such that $K_X+\Delta \sim_{\mathbb{K}, \pi} D$. 
\end{cor}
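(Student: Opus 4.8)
The plan is to reduce the relative statement to the absolute case treated in Theorem \ref{nonvani_lc} by restricting to a general fibre, and then to spread the resulting effective divisor out over the base $S$. First I would pass to a general fibre. Since $\pi$ is projective and surjective, a general fibre $F$ is a normal projective variety, the restriction $(F,\Delta_F)$ with $\Delta_F=\Delta|_F$ is again log canonical, and by adjunction for a general fibre one has $K_F+\Delta_F=(K_X+\Delta)|_F$ with no correction term. By hypothesis $\nu(K_F+\Delta_F)=0$, so Theorem \ref{nonvani_lc} applies to $(F,\Delta_F)$ and yields $K(K_F+\Delta_F)=0$; hence there is an effective $\mathbb{K}$-divisor $D_F$ on $F$ with $K_F+\Delta_F\sim_{\mathbb{K}}D_F$. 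When $\mathbb{K}=\mathbb{Q}$ the theorem moreover gives $\kappa(K_F+\Delta_F)=0$, which is the form I would exploit in that case.

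In the case $\mathbb{K}=\mathbb{Q}$ I would spread $D_F$ out using push-forward. Choose $m>0$ with $m(K_X+\Delta)$ Cartier. Since $\kappa(K_F+\Delta_F)=0$ we may take $m$ so that $h^0(F,\mathcal{O}_F(m(K_F+\Delta_F)))\neq0$; as $(K_X+\Delta)|_F=K_F+\Delta_F$, cohomology and base change identify this with the generic rank of the coherent sheaf $\pi_*\mathcal{O}_X(m(K_X+\Delta))$ on $S$, so this sheaf is nonzero. A nonzero section over a dense open $U\subseteq S$ produces an effective divisor on $\pi^{-1}(U)$ linearly equivalent to $m(K_X+\Delta)$ there; taking its closure and absorbing the resulting vertical divisor into a pull-back from $S$ shows $|m(K_X+\Delta)/S|\neq\emptyset$, and dividing by $m$ gives the desired $D$ with $K_X+\Delta\sim_{\mathbb{Q},\pi}D\geq0$.

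For $\mathbb{K}=\mathbb{R}$ the same idea works at the level of the generic fibre $X_\eta$. Embedding the algebraic closure of $K(S)$ into $\mathbb{C}$ (equivalently, working with a general closed fibre), Theorem \ref{nonvani_lc} gives an effective $\mathbb{R}$-divisor on $X_{\bar\eta}$ that is $\mathbb{R}$-linearly equivalent to $K_{X_{\bar\eta}}+\Delta_{\bar\eta}$; the finitely many rational functions witnessing this relation, together with the effective divisor, are all defined over a finitely generated field, hence spread out to rational functions and an effective horizontal $\mathbb{R}$-divisor on $\pi^{-1}(U)$ for some dense open $U$. Taking closures yields $K_X+\Delta\sim_{\mathbb{R}}D'+V$ on $X$ with $D'\geq0$ horizontal and $V$ vertical, and absorbing $V$ into a pull-back from $S$ gives the effective $D$ with $K_X+\Delta\sim_{\mathbb{R},\pi}D$.

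The main obstacle is precisely this spreading-out and descent step: one must pass from a statement valid over a dense open $U\subseteq S$ to relative $\mathbb{K}$-linear equivalence over all of $S$ with an effective representative. Concretely, one has to check that the vertical divisor produced by taking closures is, over the locus of irreducible fibres, a pull-back from $S$ that can be absorbed into the $\pi^{\ast}(\cdot)$ term of $\sim_{\mathbb{K},\pi}$ without destroying effectivity. The $\mathbb{R}$-coefficient case additionally requires descending the $\mathbb{R}$-linear equivalence from the geometric generic fibre, which is where the finite-generation (Lefschetz-principle) argument enters.
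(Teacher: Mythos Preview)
Your approach is essentially the same as the paper's: apply Theorem \ref{nonvani_lc} on a general fibre and then spread the resulting effective divisor out over $S$. The paper's proof is the single line ``This follows from Theorem \ref{nonvani_lc} and \cite[Lemma 3.2.1]{BCHM}'', and the content of that BCHM lemma is precisely the spreading-out step you sketch (passing from $K(K_F+\Delta_F)\geq 0$ on a general fibre to an effective $\mathbb{K}$-divisor relatively $\mathbb{K}$-linearly equivalent to $K_X+\Delta$ over $S$, handling both $\mathbb{Q}$ and $\mathbb{R}$ coefficients). So rather than reproving the spreading step by hand---where the vertical-divisor absorption you flag is indeed the delicate point---you can simply invoke \cite[Lemma 3.2.1]{BCHM}.
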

\begin{proof}This follows from Theorem \ref{nonvani_lc} and \cite[Lemma 3.2.1]{BCHM}.
\end{proof}

\end{document}